\theoremstyle{plain}
\newtheorem{thm}{Theorem}[section]
\newtheorem{cor}[thm]{Corollary}
\newtheorem{prop}[thm]{Proposition}
\numberwithin{equation}{section}
\newtheorem{conjecture}{Conjecture}  % Conjecture labeled by letters
\newtheorem{conj}[conjecture]{Conjecture}  % Conjecture labeled by letters
\theoremstyle{definition}
\newtheorem{example}[thm]{Example}
\newtheorem{lemma}[thm]{Lemma}
\newtheorem{rmk}[thm]{Remark}
\newtheorem{question}[thm]{Question}
\theoremstyle{remark}
\newcommand{\BA}{{\mathbb{A}}}
\newcommand{\BC}{{\mathbb{C}}}
\newcommand{\BL}{{\mathbb{L}}}
\newcommand{\BQ}{{\mathbb{Q}}}
\newcommand{\BZ}{{\mathbb{Z}}}
\newcommand{\CC}{{\mathcal C}}
\newcommand{\CH}{{\mathcal H}}
\newcommand{\CO}{{\mathcal O}}
\DeclareFontFamily{OT1}{rsfs}{}
\DeclareFontShape{OT1}{rsfs}{n}{it}{<-> rsfs10}{}
\DeclareMathAlphabet{\curly}{OT1}{rsfs}{n}{it}
\newcommand{\p}{\mathbb{P}}
\newcommand{\Mbar}{{\overline M}}
\newcommand{\vir}{{\text{vir}}}
\newcommand{\Chow}{\mathrm{Chow}}
\newcommand{\PT}{\mathsf{PT}}
\newcommand{\DT}{\mathsf{DT}}
\newcommand{\VW}{\mathsf{VW}}
\newcommand{\PTcal}{\mathcal{PT}}
\newcommand{\PTcalprim}{\mathcal{PT}^{\mathrm{prim}}}
\newcommand{\DTcal}{\mathcal{DT}}
\newcommand{\Hilb}{\mathsf{Hilb}}
\newcommand{\Sym}{\mathrm{Sym}}
\newcommand{\ch}{\mathsf{ch}}
\newcommand{\Exp}{{\mathrm{Exp}}}
\newcommand{\Log}{{\mathrm{Log}}}
\newcommand{\GV}{{\mathsf{GV}}}
\begin{document}
\baselineskip=14.5pt
\title[Towards refined curve counting on the Enriques surface II]{Towards refined curve counting on the Enriques surface II: Motivic refinements}

\author{Georg Oberdieck}

\address{Universit\"at Heidelberg, Institut f\"ur Mathematik}
\email{georgo@uni-heidelberg.de}
\date{\today}

\begin{abstract}
We study the motivic Pandharipande-Thomas invariants of the Enriques Calabi-Yau threefolds in fiber curve classes by basic computations and analysis of a wallcrossing formula of Toda. Motivated by our results we conjecture a formula for the perverse Hodge numbers of the compactified Jacobian fibration of linar systems on Enriques surfaces in terms of its Betti numbers. This leads to an asymptotic for said Hodge numbers and
raises questions about the behaviour of the extremal Hodge numbers.
\end{abstract}

\maketitle

\setcounter{tocdepth}{1} 
\tableofcontents

\section{Introduction}
Let $Y$ be an Enriques surface and let $X \to Y$ be the associated covering K3 surface with covering involution $\tau : X \to X$. There are two Calabi-Yau threefolds into which $Y$ naturally embedds.
The first is the local Enriques surface defined as the total space of the canonical bundle, $K_Y= \mathrm{Tot}(\omega_Y)$. It can also be constructed as the quotient
\[ K_Y = (X \times \BC)/\langle \tau \times (-1)_{\BC} \rangle. \]
The second is the Enriques Calabi-Yau threefold
\[
Q = (X \times E)/\langle \tau \times (-1)_{E} \rangle,
\]
where $E$ is an elliptic curve.
By projecting to the second factor these threefolds admit isotrivial K3 fibrations
\[ \pi : K_Y \to \BC/\langle (-1)_{\BC} \rangle \cong \BC,
\quad \pi : Q \to E/\langle (-1)_{E} \rangle \cong \p^1 \]
with a double Enriques fibers over $0 \in \BC$ and $4$ double Enriques fibers over the torsion points of $E$ respectively. This leads to the embeddings
\[ \iota_{K_Y} : Y \cong (X \times 0)/\BZ_2 \hookrightarrow K_Y, \quad
\iota_{Q,a} : Y \cong (X \times a)/\BZ_2 \hookrightarrow Q \]
where $a \in E$ is $2$-torsion. The first embedding is of course just the zero section.
We often drop the subscript $a$ in the second embedding.

Pandharipande-Thomas (PT) theory associates curve counting invariants to Calabi-Yau threefolds $X$ by integrating over the moduli space of stable pairs (where\footnote{In this paper we will work with integral cohomology always {\em modulo torsion}. We write $H^k(X,\BZ)$ and $H_k(X,\BZ)$ for the integral (co)homology modulo torsion. All curve classes will be considered modulo torsion.} $\beta \in H_2(X,\BZ)$),
\[
\PT_{n,\beta}(X) = \int_{[ P_{n,\beta}(X) ]^{\vir}} 1 \quad \in \BZ.
\]
A curve class $\beta \in H_2(Q,\BZ)$ is a fiber class if $\pi_{\ast} \beta = 0$ or equivalently, if $\beta$ is the pushforward of a class via $\iota_{Q}$.
By a degeneration argument \cite{MP}, the PT invariants of $Q$ for fiber curve classes are essentially the same as $K_Y$. Concretely, one has
\[
\sum_{n} 
\sum_{\beta \in H_2(Y,\BZ)} \PT_{n,\iota_{Q \ast} \beta}(Q) (-p)^n Q^{\beta}
=
\left( \sum_{n} \sum_{\beta \in H_2(Y,\BZ)} \PT_{n,\iota_{K_Y\ast} \beta}(K_Y) (-p)^n Q^{\beta} \right)^4.
\]
Moreover, the invariants $\PT_{n,\iota_{K_Y \ast}\beta}(Q)$ were all computed in \cite{Enriques}.
%and were computed recently in \cite{Enriques}.
They are equivalent to the Gromov-Witten invariants of the Enriques surface $Y$ with a $(-1)^{g-1} \lambda_{g-1}$ Hodge insertion.

If a Calabi-Yau threefold $X$ admits the action of a torus $T$, the Pandharipande-Thomas invariants are {\em refined} by
the $T$-equivariant $K$-theoretic PT invariants introduced by Nekrasov-Okounkov \cite{NO}.
These invariants are no longer integers, but rational functions in the equivariant weights of the torus.
For $K_Y$ these "NO-refined" PT invariants were studied in \cite{RefinedEnriques1} and an explicit conjecture in terms of Jacobi forms was given for them.

Here we are interested in a second, different refinement of PT invariants, namely the motivic PT invariants introduced by Kontsevich-Soibelman \cite{KS} and Bussi-Joyce-Meinhardt \cite{BJM}. The invariant (the virtual motive of the moduli space of stable pairs) takes values in some Grothendieck group of varieties and can be considered for all Calabi-Yau threefolds (regardless of the existence of a torus action). The Euler characteristic specialization of the virtual motive then recovers the original PT invariant whenever the Calabi-Yau threefold is compact\footnote{It recovers the Behrend-function weighted Euler characteristic of the moduli space, which is equal to the degree of the virtual class if the moduli space is compact by work of Behrend \cite{Behrend}.}. In the non-compact case, however, this may fail. For example, the motivic invariants of $K_Y$ do not recover the usual PT invariants under specialization and hence they are not the right quantity to consider. The motivic refinement is also much more delicate to work with: it requires an orientation data compatible under summands (constructed canonically by Joyce-Upmeier \cite{JU}) and is not invariant under deformation of $X$. We will usually consider the Hodge and Betti polynomials of the virtual motive.
Explicit computations of motivic PT invariants in other geometries can be found in \cite{BBS, MMNS, DM15, KKP, CKK, Dec22a, Leigh}.

The goal of this paper is then to study the Hodge polynomial of the motivic Pandharipande-Thomas invariants of $Q$ in fiber curve classes for a generic Enriques surface $Y$. The invariants are denoted $\PTcal_{n,\beta}$ for $\beta \in H_2(Y,\BZ) \subset H_2(Q,\BZ)$,
and give a refinements of the usual PT invariants $\PT_{n,\beta}(Q)$.
We obtain the following series of results:
\begin{enumerate}
	\item[(a)] We compute the invariants $\PTcal_{0,df}$, where $f$ is a half-fiber of an elliptic fibration $Y \to \p^1$ on the Enriques in Proposition~\ref{prop:PTcal for fiber classes} as
\[ \sum_{d \geq 0} \PTcal_{df,0} q^d
=
\prod_{m \geq 1} \frac{(1-q^{2m})^{6}}{(1-(t \tilde{t})^{-1} q^{2m}) (1-q^{m})^{8} (1- t \tilde{t} q^{2m})}. \]
%	This corresponds to the fiber class of the induced elliptic fibration $Q \to (E \times \p^1)/\BZ_2$.
	\item[(b)] We moreover give an explicit conjecture for $\PTcal_{n,df}$ for all $n,d$, see Conjecture~\ref{conj:fiber classes complete}.
	This predicts all PT invariants for the fiber classes of the induced elliptic fibration $Q \to (E \times \p^1)/\BZ_2$.
	\item[(c)] We state Toda's equation, which links PT invariants with generalized DT invariants of sheaves supported in fibers of $\pi :Q \to \p^1$. We also discuss the dependence of the generalized DT invariants on the Chern character, see Section~\ref{subsec:structure results}. These structure results for PT invariants were very useful in the work \cite{Enriques}.
	\item[(d)] For $2 \nmid \beta$, we write the generating series of PT invariants as a sum of two terms: the first is essentially the contribution from $K_Y$ and of pure Hodge type, the second is a new contribution which vanishes under the unrefined limit and can be thought to come from the compactified Jacobian of linear systems on the Enriques surface.
	The first term controls a certain asymptotic of the motivic PT invariants.
\end{enumerate}
Computation (a) in particular shows that the motivic PT invariants of $Q$ (after the $\chi_{-y}$-genus specialization $\tilde{t}=1$) differ from the Nekrasov-Okounkov refined PT invariants of $K_Y$.

Despite the above results, a complete conjectural picture for the motivic theory of $Q$ in fiber classes
is unfortunately missing so far
and requires additional computational tools.

\subsection{Gopakumar-Vafa invariants}
The motivic PT invariants of a Calabi-Yau threefold are conjecturally related to a refined version of Maulik-Toda theory \cite{MT} (which is a mathematical proposal for Gopakumar-Vafa invariants following earlier proposals by \cite{HST,KiemLi}). For the CY3 $Q$ and fiber curve classes $\beta$ with $2 \nmid \beta$, this boils down to a connection between motivic PT theory and the geometry of the relative compactified Jacobian of linear systems on the Enriques surface.
The result (d) above has consequences for the topology of these moduli spaces which we now discuss.

Let $\beta \in H_2(Y,\BZ)$ be an effective curve class on a Enriques surface $Y$ and let $M_{\beta}$ be one of the two connected components of the moduli space of $1$-dimensional stable sheaves $F$ on $Y$ with Euler characteristic $\chi(F)=1$.
We assume that $Y$ is generic and that $2\nmid \beta$,
in which $M_{\beta}$ is a smooth projective variety of dimension $\beta^2+1$.
Moreover, by a result of Sacca \cite{Sacca}, $M_{\beta}$ is Calabi-Yau, i.e. has trivial canonical bundle $\omega_{M_{\beta}} \cong \CO$.
By sending a sheaf to its fitting support, we obtain
the Hilbert-Chow morphism $\rho : M_{\beta} \to \p$ to a linear system of curves in class $\beta$.
We let 
\[ {^ph}^{i,j}(M_{\beta}) := 
h^j( {^p\CH^{i}}(R \rho_{\ast} \BQ[\dim M_{\beta}]) )  \]
be the assoiated perverse Hodge numbers,
which are the graded dimension of the induced perverse filtration on $H^{\bullet}(M_{\beta},\BQ))$, see \cite{dCM} for an introduction to perverse sheaves.
If $\beta^2=2d$ and $\beta$ is effective, then $M_{\beta}$ has dimension $2d+1$ and the Hilbert-Chow morphism $\rho:M_{\beta} \to \p^d$ has $(d+1)$-dimensional fibers \cite{Sacca}.
Hence ${^ph}^{i,j}(M_{\beta})$ is non-zero only for
$i \in \{ -(d+1), \ldots, d+1 \}$ and $j \in \{ - d, \ldots, d\}$ .

The perverse Hodge numbers recover the usual Betti numbers by
\begin{equation} \label{Betti in perverse Hodge}
b_{k+\dim(M_{\beta})}(M_{\beta}) = \sum_{i+j=k} {^ph}^{i,j}(M_{\beta}).
\end{equation}
If $\beta_1^2=\beta_2^2$, then $M_{\beta_1}, M_{\beta_2}$ are known to be birational by \cite{Beckmann,NY}, so their Betti (and in fact the Hodge) numbers agree by Kontsevich's theory of $p$-adic integration. In other words, the Betti numbers of $M_{\beta}$ depend upon $\beta$ only through the square $\beta^2=2d$. We write $b_{i,d} = b_i(M_{\beta})$.
% for these Betti numbers.

Our main conjecture is as follows:

\begin{conj} \label{conj:perverse Hodge intro}
(a) If $2 \nmid \beta$, then the perverse Hodge numbers ${^ph}^{i,j}(M_{\beta})$ also depend upon $\beta$ only through the square $\beta^2$. If $\beta^2 = 2d$, we write
\[ {^ph}^{i,j}(M_{\beta}) = {^ph}^{i,j}_d. \]
(b) We have the following identity
	\begin{small}
		\begin{multline}
			\label{Key equation}
			\sum_{d \geq 0} \sum_{i,j} {^ph}^{i,j}_d (-1)^{i+j} p^i u^j q^d
			= 
			\frac{(1-u^{-1}p)(1-up)}{(-p)} \prod_{m \geq 1} \frac{1}{(1-q^m)^{8}} \\
			\times 
			\prod_{\substack{m \geq 1 \\ m \text{ odd}}} 
			\frac{1}{(1-u^{-2} q^m) (1-u^2 q^m) (1-up q^m) (1-u p^{-1} q^m)(1-u^{-1} p q^m) (1-u^{-1} p^{-1} q^m) (1-q^m)^2} \\
			-
			\left( \sum_{d=0}^{\infty} u^{-(2d+1)} \sum_{i=0}^{2d+2} (-u)^i b_{i,d} \right)
						\prod_{m \geq 1} \frac{(1-u^2 q^{2m})(1-u^{-2} q^{2m}) (1-q^{2m})^2}{(1-up q^{2m}) (1-u^{-1} p^{-1} q^{2m}) (1-u^{-1} p q^{2m})(1-u p^{-1} q^{2m})}
		\end{multline}
	\end{small}
\end{conj}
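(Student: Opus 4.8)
The plan is to deduce the Key equation from the refined Maulik-Toda correspondence \cite{MT} for $Q$ in fiber classes, combined with the explicit control of the motivic PT series obtained elsewhere in the paper. The left-hand side $\sum_{d}\sum_{i,j}{^ph}^{i,j}_d(-1)^{i+j}p^iu^jq^d$ is, by definition of the refined theory, the generating series of refined Gopakumar-Vafa / BPS invariants of $Q$ read off from the perverse filtration of the support map $\rho:M_{\beta}\to\p^d$: the variable $p$ records the perverse (Lefschetz) degree of ${^p\CH^i}(R\rho_{\ast}\BQ)$ and $u$ records the Hodge weight. Via the (refined, motivic) GV/PT correspondence these invariants are determined by — and conversely determine — the motivic PT series $\sum_{n,d}\PTcal_{n,df}(-p)^nq^d$. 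The conjecture is then the closed-form evaluation of this chain, in which the two-term decomposition of the PT series established in item (d) of the introduction produces exactly the two summands on the right-hand side.

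For part (b) I would proceed as follows. After fixing the refined Maulik-Toda recipe and the motivic orientation data of \cite{JU}, I would first compute the ``pure'' summand. By the degeneration relating fiber-class invariants of $Q$ to those of $K_Y$ (as in the unrefined case \cite{MP}), the pure part is governed by the four double Enriques fibers, i.e.\ by the ``$K_Y$ contribution'' of (d); its explicit product shape is exactly of the type computed in Proposition~\ref{prop:PTcal for fiber classes} for $n=0$ and predicted for all $n$ in Conjecture~\ref{conj:fiber classes complete}, and one reads off directly from that formula the purity asserted in (d). This produces the first summand on the right-hand side. The second summand is then forced to be the complementary ``Jacobian'' contribution of (d), and the remaining task is to identify its prefactor $\sum_{d}u^{-(2d+1)}\sum_{i}(-u)^ib_{i,d}$ with the Betti-number generating series of $M_{\beta}$, the remaining infinite product being the universal contribution attached to the K3 geometry transverse to the Enriques fibers.

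Part (a) — that ${^ph}^{i,j}(M_{\beta})$ depends on $\beta$ only through $\beta^2$ — is needed for the left-hand side to be well defined, and I would treat it separately. The birational equivalences $M_{\beta_1}\dashrightarrow M_{\beta_2}$ of \cite{Beckmann,NY} already yield equality of Hodge numbers by $p$-adic integration, as recalled before the conjecture. To promote this to the perverse refinement I would show that these birational maps are compatible with the support morphisms $\rho$ — equivalently, that the pairs $(M_{\beta},\rho)$ all lie in one deformation class of Calabi-Yau fibrations — and then invoke deformation invariance of perverse numbers. The subtlety is that $M_{\beta}$ is an \emph{odd}-dimensional Calabi-Yau, of dimension $2d+1$, rather than a holomorphic-symplectic manifold, so the hyper-K\"ahler deformation-invariance machinery does not apply verbatim and an adapted statement for these non-symplectic abelian fibrations is required.

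The main obstacle is the first link: the refined Maulik-Toda correspondence is itself conjectural, and making it precise in the motivic category requires a consistent choice of orientation data in the sense of \cite{JU} together with a proof that the perverse and Hodge gradings combine as predicted. Granting the correspondence, the hardest concrete point is to show that the ``Jacobian'' summand is controlled by the Betti numbers $b_{i,d}$ \emph{alone} and not by finer Hodge data; this amounts to proving that the cohomology of $M_{\beta}$ responsible for the non-pure part is of Tate type, up to the overall shift by $u^{-(2d+1)}$. Establishing this Tate-type property is precisely what the present input — Proposition~\ref{prop:PTcal for fiber classes} together with Toda's equation from Section~\ref{subsec:structure results} — does not yet settle, which is why the statement is posed as a conjecture rather than a theorem.
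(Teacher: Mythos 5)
Your overall architecture --- refined Maulik--Toda correspondence plus a wall-crossing decomposition of the motivic PT series into an explicit term and a ``Jacobian'' term --- matches the paper's derivation in outline, but the concrete engine driving the formula is misidentified. The explicit first product in \eqref{Key equation} does \emph{not} come from Proposition~\ref{prop:PTcal for fiber classes} or Conjecture~\ref{conj:fiber classes complete}: those concern the classes $\beta=df$ with $f$ a half-fiber of an elliptic fibration (so $\beta^2=0$), an entirely different family from the classes $2\nmid\beta$ with $\beta^2=2d$ arbitrary that Conjecture~\ref{conj:perverse Hodge intro} is about. Nor can one degenerate to $K_Y$ motivically --- the introduction stresses that the motivic invariants of the non-compact $K_Y$ do not even specialize to the correct unrefined invariants, so the ``$K_Y$ contribution'' is only an a posteriori resemblance, not an input. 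What the paper actually does (Section~\ref{sec:motivic invariants}, subsection on $2\nmid\beta$) is: apply the motivic Toda wall-crossing formula (Theorem~\ref{thm:motivic toda formula}) in the form \eqref{Log toda eqn}; use Theorem~\ref{thm:dependence of DT invariants} to reduce the BPS classes $\Omega(r,\beta,n)$ to functions $\Omega_d$ of the square $v^2=2d$; evaluate $\Omega_d$ for $2d$ odd as $8\chi_{t,\tilde t}([\Hilb^{d+1/2}(Y)]^{\vir})$ via G\"ottsche's formula (this is the source of the explicit product) and for $2d$ even as $8(-u)^{-(2d+1)}H(M((0,\beta_d,0),L))$ (the source of the Betti prefactor); and resum the $r$-odd versus $r$-even/$r=0$ parts of the wall-crossing sum into the two theta-quotient factors using the Jacobi-form identities of \cite{RefinedEnriques1}. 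Only then is Conjecture~\ref{conj:refined MT} invoked to rename the resulting GV polynomials as perverse Hodge numbers of $M_{\beta,1}\cong 4$ copies of $M^Y(0,\beta,1)$.

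Two further points. Your worry that one must prove the Jacobian summand is of ``Tate type'' so that only Betti numbers enter is moot: the entire identity is formulated in the Betti realization $t=\tilde t=u$ from the start (the refined GV polynomials are defined via $Z_{PT}^{\mathrm{Betti}}$), so the prefactor involves $b_{i,d}$ by construction, not by a purity theorem. For part (a), the paper's motivation runs through the sheaf-counting side --- Theorem~\ref{thm:dependence of DT invariants} forces $\Omega(v)$, hence $\PTcalprim_{n,\beta}$, to depend only on $\beta^2$ for $\beta$ primitive, and this is extrapolated to all $2\nmid\beta$ --- rather than through birational maps $M_{\beta_1}\dashrightarrow M_{\beta_2}$ compatible with the support morphisms. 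Your route is a genuinely different (and reasonable) line of attack, and you correctly flag that the hyper-K\"ahler deformation machinery does not apply to these odd-dimensional Calabi--Yau fibrations; but as written your proposal does not reconstruct the derivation the paper actually uses to arrive at \eqref{Key equation}.
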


The first term on the right in \eqref{Key equation} is explicit
and is related to the K-theoretic PT theory of $K_Y$ \cite{RefinedEnriques1}.
The second term in \eqref{Key equation} is inexplicit since it depends on the Betti numbers of $M_{\beta}$ (which are not known). So the conjecture says that the Betti numbers of the
$M_{\beta}$'s determines their perverse Hodge numbers.
% of the $M_{\beta}$'s.
This is sursprising given that perverse Hodge numbers of $M_{\beta}$ refine the Betti numbers via \eqref{Betti in perverse Hodge}.

The first few Betti numbers of $M_{\beta}$ were determined by Sacca.
They are 
$1,0,10,22,10,0,1$ for $M_{\beta}$ if $\beta^2=2$, and $b_1(M_\beta)=0$ and $b_2(M_\beta)=11$ for all $\beta^2 >  2$.
%Otherwise the Betti numbers are unknown.
Inserting these values we can read off already a good chunk of the first perverse Hodge numbers of $M_{\beta}$.
The results are listed in tables 1-4 below. 
The question mark indicate numbers which are not determined.
The numbers in Table 2 have been verified by A. Leraand.

%\begin{figure}[h]
\begin{minipage}{0.25\linewidth}
	\begin{longtable}{| l | c |}
		\hline
		\diagbox[width=1.1cm,height=0.7cm]{$i$}{$j$} 
		& $0$  \\
		\nopagebreak \hline
		$-1$ & $1$  \\
		$0$ & $2$ \\
		$1$ & $1$ \\
		\hline
		\caption{${^ph}^{i,j}_0$}
	\end{longtable}
\end{minipage}
\begin{minipage}{0.35\linewidth}
	\begin{longtable}{| c | c  c  c |}
		\hline
		\diagbox[width=1.1cm,height=0.7cm]{$i$}{$j$} 
		& $-1$ & $0$ & $1$ \\
		\nopagebreak \hline
		$-2$ & $1$ &  & $1$  \\
		$-1$ &  & $8$ &  \\
		$0$ & $1$ & $22$ & $1$ \\
		$1$ &  & $8$ &  \\
		$2$ & $1$ &  & $1$ \\
		\hline
		\caption{${^ph}^{i,j}_1$}
	\end{longtable}
\end{minipage}
\begin{minipage}{0.38\linewidth}
	\begin{longtable}{| c | c  c  c c c |}
		\hline
		\diagbox[width=1.1cm,height=0.7cm]{$i$}{$j$} 
		& $-2$ & $-1$ & $0$ & $1$ & $2$ \\
		\nopagebreak \hline
		$-3$ & $1$ &  & $1$ & & $1$ \\
		$-2$ &  & $9$ & & $9$ &  \\
		$-1$ & $1$ & $2$ & $47$ & $2$ & $1$ \\
		$0$ & ? & ? & ? & ? & ? \\
		$1$ & $1$ & $2$ & $47$ & $2$ & $1$ \\
		$2$ &  & $9$ & & $9$ &  \\
		$3$ & $1$ &  & $1$ & & $1$ \\
		\hline
		\caption{${^ph}^{i,j}_2$}
	\end{longtable}
\end{minipage}

\begin{minipage}{0.99\linewidth}
	\begin{longtable}{| c | c  c  c c c c c |}
		\hline
		\diagbox[width=1.1cm,height=0.7cm]{$i$}{$j$} 
		& $-3$ & $-2$ & $-1$ & $0$ & $1$ & $2$ & $3$ \\
		\nopagebreak \hline
		$-4$ & $1$ &  & $1$ & & $1$ & & $1$ \\
		$-3$ &  & $9$ & & $10$ & & $9$ & \\
		$-2$ & $1$ &  & $55$ & & $55$ & & $1$ \\
		$-1$ &  & $10$ & $22$ & $220$ & $22$ & $10$ & \\
		$0$ & ? & ? & ? & ? & ? & ? & ? \\
		$1$ &  & $10$ & $22$ & $220$ & $22$ & $10$ & \\
		$2$ & $1$ &  & $55$ & & $55$ & & $1$ \\
		$3$ &  & $9$ & & $10$ & & $9$ & \\
		$4$ & $1$ &  & $1$ & & $1$ & & $1$ \\
		\hline
		\caption{${^ph}^{i,j}_3$}
	\end{longtable}
\end{minipage}

The second interesting feature of \eqref{Key equation} is that the second (so far unknown) term only contributes to part of the formula. Asymptotically, for $d \to \infty$ and $i+d, j+d$ fixed it does not contribute, and so we can read of the asymptotics of the perverse Hodge numbers:
%More precisely we have the following. Denote

Consider the shifted perverse Hodge numbers
\[ {^p\widetilde{h}}^{i,j}_d = {^ph}^{-(d+1)+i,-d+j}_d. \]

\begin{prop} \label{prop:asymptotics}
	If \Cref{conj:perverse Hodge intro} holds
	and  $i<d/2$ and $j<d/2-1$, then ${^p\widetilde{h}}^{i,j}_d$
	%:=	{^ph}^{-(d+1)+i,-d+j}_d$ 
	is independent of $d$. 
	Let $\tilde{h}^{ij}_{\infty}$ denote this constant.
	Then
	\begin{align*}
	\sum_{i,j \geq 0} \tilde{h}^{ij}_{\infty} x^i y^j
	& =
	(1-xy) \prod_{n \geq 1} \frac{1}{(1-x^{n+1} y^{n-1}) (1-x^{n-1} y^{n+1}) (1-x^n y^n)^{10}} \\
	& = 1 + x^{2} + 9 x y + y^{2} + x^{4} + 10 x^{3} y + 56 x^{2} y^{2} + 10 x y^{3} + y^{4} \\
	& \quad + x^{6} + 10 x^{5} y + 66 x^{4} y^{2} + 276 x^{3} y^{3} + 66 x^{2} y^{4} + 10 x y^{5} + y^{6} + O((x,y)^8).
	%\\
	%& x^{8} + 10 x^{7} y + 67 x^{6} y^{2} + 341 x^{5} y^{3} + 1177 x^{4} y^{4} + 341 x^{3} y^{5} + 67 x^{2} y^{6} + 10 x y^{7} + y^{8} + O(x, y)^{10}
\end{align*}	
\end{prop}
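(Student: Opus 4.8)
\section*{Proof proposal}

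The plan is to feed the conjectural identity \eqref{Key equation} into a single change of variables that turns it into a generating series for the shifted numbers ${^p\widetilde{h}}^{i,j}_d$, and then to isolate the $d\to\infty$ coefficient as the residue of a simple pole. Denote by $\Psi(p,u,q)$ the common value of the two sides of \eqref{Key equation} and substitute $p=x$, $u=y$, $q=xyw$. A monomial ${^ph}^{I,J}_d\,(-1)^{I+J}p^Iu^Jq^d$ becomes $(-1)^{I+J}x^{I+d}y^{J+d}w^d$; since the shift reads $i=I+d+1$, $j=J+d$, we have $x$-exponent $i-1$, $y$-exponent $j$, and $(-1)^{I+J}=-(-1)^{i+j}$. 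Multiplying by $-x$ gives
\[
-x\,\Psi(x,y,xyw)=\sum_{i,j}(-1)^{i+j}x^iy^j\sum_d {^p\widetilde{h}}^{i,j}_d\,w^d .
\]
Hence $(-1)^{i+j}\tilde h^{ij}_\infty$ is the stable value of the coefficient of $x^iy^jw^d$ on the right, and after the harmless substitution $x\mapsto -x$, $y\mapsto -y$ the series $\sum\tilde h^{ij}_\infty x^iy^j$ equals $\lim_{w\to1}(1-w)\,\big({-x}\Psi(x,y,xyw)\big)$ read off coefficientwise. I work with the right-hand side of \eqref{Key equation}.

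First I would dispose of the second (inexplicit, Betti-number) term. Its product involves $q$ only through factors $q^{2m}$, so under $q=xyw$ the $w$-degree equals the total $q$-degree $\delta$, while the $x$- and $y$-degrees exceed $\delta$ by the net $p$- resp. $u$-exponents of the explicit $(u^{\pm},p^{\pm})$-factors. Since each geometric factor spends at least $q^2$ per unit of exponent, $\delta\geq 2(A+B+C+E+F+G)$ in the obvious notation, and the $p$-exponent $\gamma$ satisfies $\gamma\geq-(A+B+C+E+F+G)\geq-\delta/2$. Including the extra $x$, every monomial of $x\cdot(\text{second term})$ thus has $i=\gamma+\delta+1\geq \delta/2+1=d/2+1$, so it contributes nothing once $i<d/2$. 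Equivalently, none of its factors degenerates to $\tfrac1{1-w}$, so it carries no pole at $w=1$ and drops out of the limit.

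The main computation is the first, explicit term. After the substitution its only factor that becomes independent of $x,y$ is the $m=1$ factor $\frac{1}{1-u^{-1}p^{-1}q}\mapsto\frac1{1-w}$; every other factor turns into $\frac{1}{1-x^ay^bw^m}$ or $1-x^ay^bw^m$ with $(a,b)\neq(0,0)$, hence contributes to a fixed $x^iy^j$ only a polynomial in $w$. Writing the term as $\frac{1}{1-w}R(x,y,w)$, a linear weight estimate with $\phi=d-i-j$ (every denominator factor of $R$ has $\phi$-weight $\leq-1$ per unit, the unique positive numerator contribution being the single term $-xy^{-1}w$ from $1-u^{-2}q$ at $m=1$) bounds the $w$-degree of $[x^iy^j]R$ by $i+j+1$. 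Therefore the coefficient of $x^iy^jw^d$ in $\frac{R}{1-w}=\sum_d\big(\sum_{d'\leq d}[w^{d'}]R\big)w^d$ equals $[x^iy^j]R(x,y,1)$ for all $d\geq i+j+1$, which is guaranteed by the stated ranges $i<d/2$, $j<d/2-1$. This simultaneously proves the $d$-independence and identifies the limit as $\tilde h^{ij}_\infty$ up to sign.

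It remains to simplify $R(x,y,1)$, obtained by deleting the $m=1$ factor $\frac{1}{1-u^{-1}p^{-1}q}$ and setting $w=1$ elsewhere. I would organize the infinite product by the exponent type $(a,b)$ of each factor $\frac{1}{1-x^ay^b}$. The prefactor $(1-u^{-1}p)(1-up)/(-p)$ and the extra $x$ leave the numerator $(1-xy^{-1})(1-xy)$, and $(1-xy^{-1})$ cancels the $m=1$ factor of $\prod_{m\text{ odd}}\frac{1}{1-u^{-2}q^m}$, restoring the $x\leftrightarrow y$ symmetry. A tally then shows the diagonal $(n,n)$ occurs with total multiplicity $10$ (as $8+2$ for odd $n$ and $8+1+1$ for even $n$), while the off-diagonal factors assemble exactly into $(1-x^{n+1}y^{n-1})^{-1}(1-x^{n-1}y^{n+1})^{-1}$ for $n\geq1$, leaving the single numerator $(1-xy)$; this is the asserted formula, and the displayed $O((x,y)^8)$ expansion follows by expanding it. The main obstacle is precisely this bookkeeping: getting the multiplicity of every exponent type exactly right (in particular the clean value $10$ on the diagonal and the lone surviving numerator $(1-xy)$) and verifying the cancellation that reinstates symmetry. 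The conceptual input—recognizing $\frac{1}{1-u^{-1}p^{-1}q}\mapsto\frac1{1-w}$ as the unique source of stabilization—is what renders the rest routine.
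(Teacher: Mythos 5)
Your strategy is the same as the paper's: the substitution $q=xyw$ is exactly the paper's $\tilde F(u,p,q)=(-p)F(u,p,upq)$, the factor $1-u^{-1}p^{-1}q\mapsto 1-w$ is the same stabilizing pole, and the evaluation at $w=1$ is the same final step. Your treatment of the second (Betti) term and your bookkeeping of the product at $w=1$ (multiplicities $8+2$ and $8+1+1$ on the diagonal, cancellation of $(1-xy^{-1})$) are correct and more detailed than what the paper records.

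The gap is in your weight estimate for the first term. You treat the $m=1$ factor $1-u^{-2}q$ as contributing ``the single term $-xy^{-1}w$'', i.e.\ as a numerator factor supplying at most one unit of positive $\phi$-weight. But in \eqref{Key equation} this factor sits in the \emph{denominator}: after substitution it is $\frac{1}{1-xy^{-1}w}=\sum_{k\ge 0}x^ky^{-k}w^k$, contributing arbitrarily many units, each of $\phi$-weight $+1$. Hence the claimed bound $\deg_w[x^iy^j]R\le i+j+1$ fails: pairing $\frac{1}{1-xy^{-1}w}$ (exponent $k$) with $\frac{1}{1-y^2w}$ (exponent $l$, from $1-up^{-1}q$ at $m=1$) yields monomials $x^ky^{2l-k}w^{k+l}$, i.e.\ contributions to $[x^iy^j]$ at $w$-degree $\tfrac{3i+j}{2}$, exceeding $i+j+1$ whenever $i>j+2$ (for instance $[x^4y^0w^6]R\neq 0$ while your bound gives $5$). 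The conclusion still holds, but one must use an asymmetric weight: $\frac{1}{1-xy^{-1}w}$ is the only factor with negative $y$-exponent, its exponent is bounded by the total $x$-degree, and every factor satisfies $c\le\tfrac32 a+\tfrac12 b$ per unit, giving $\deg_w[x^iy^j]R\le\tfrac32 i+\tfrac12 j$, which is $<d$ exactly in the stated asymmetric range $i<d/2$, $j<d/2-1$. For what it is worth, the paper's own lemma asserts vanishing of the relevant coefficient of $G=(1-q)\tilde F$ for $d>\tfrac34(i+j)$, which fails for the same reason (e.g.\ at $(i,j,d)=(1,1,2)$), so the corrected asymmetric estimate is needed there as well.
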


This proposition is parallel to the asymptotics of peverse Hodge numbers of compactified Jacobians of linear systems on Fano surfaces \cite{PSSZ}.
% $\p^2$ \cite{ShenPi} or Fano surfaces \cite{SZ}.

We state also the implied asymptotic for the Betti numbers:
\begin{cor} Assume \Cref{conj:perverse Hodge} holds, $2 \nmid \beta$ and $\beta^2=2d$. Then $b_i(M_{\beta}) = b_i^{\infty}$ whenever $d > i+1$,
	where
	% of $M_{\beta}$ stabilize for $d \to \infty$. Let $b^i_{\infty} = \lim_{d \geq \infty} b^i(M_d)$. Then
	\begin{align*}
		\sum_{i \geq 0} b_i^{\infty} x^i
		& = (1-x^2) \prod_{n \geq 1} \frac{1}{(1-x^{2n})^{12}} \\
		& = 1 + 11x^{2} + 78x^{4} + 430x^{6} + 2015x^{8} + 8373x^{10} + 31706x^{12} +\ldots
	\end{align*}
\end{cor}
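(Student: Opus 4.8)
The plan is to deduce the statement directly from the perverse Hodge asymptotics of \Cref{prop:asymptotics} by summing along anti-diagonals. Recall from \eqref{Betti in perverse Hodge} that, since $\dim M_{\beta} = 2d+1$,
\[ b_{k+2d+1}(M_{\beta}) = \sum_{i+j=k} {}^{p}h^{i,j}_{d}. \]
Write the Betti degree as $\ell = k+2d+1$ and pass to the shifted numbers ${}^{p}\widetilde{h}^{a,b}_{d} = {}^{p}h^{-(d+1)+a,\,-d+b}_{d}$ via $a=i+(d+1)$, $b=j+d$. Then $a+b = \ell$ and the support condition ${}^{p}h^{i,j}_{d}=0$ for $i<-(d+1)$ or $j<-d$ becomes $a,b\ge 0$, so
\[ b_{\ell}(M_{\beta}) = \sum_{\substack{a+b=\ell \\ a,b\ge 0}} {}^{p}\widetilde{h}^{a,b}_{d}. \]
Thus the $\ell$-th Betti number is exactly the anti-diagonal sum of the shifted perverse Hodge numbers in the first quadrant, where here $\ell$ plays the role of the index $i$ in the statement.

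Next I would argue that, once $d$ is large relative to $\ell$, every term of this finite sum has stabilised. By \Cref{prop:asymptotics} each ${}^{p}\widetilde{h}^{a,b}_{d}$ lying in the stable region equals its limit $\widetilde{h}^{ab}_{\infty}$, independently of $d$ (hence of the component $M_{\beta}$ with $\beta^2=2d$, consistent with the birationality invariance recorded before \Cref{conj:perverse Hodge intro}). Since all lattice points with $a+b=\ell$, $a,b\ge 0$, lie in a bounded neighbourhood of the origin, they all enter the stable region for $d$ large enough, and then
\[ b_{\ell}(M_{\beta}) = \sum_{\substack{a+b=\ell \\ a,b\ge 0}} \widetilde{h}^{ab}_{\infty} =: b^{\infty}_{\ell}, \]
a constant independent of $d$. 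Evaluating the generating series is then immediate: $\sum_{\ell} b^{\infty}_{\ell} x^{\ell} = \sum_{a,b\ge 0} \widetilde{h}^{ab}_{\infty} x^{a+b}$ is the diagonal specialisation $y=x$ of the bivariate series of \Cref{prop:asymptotics}. Under $y=x$ each factor collapses, $(1-x^{n+1}y^{n-1}) \mapsto (1-x^{2n})$, $(1-x^{n-1}y^{n+1}) \mapsto (1-x^{2n})$ and $(1-x^n y^n)^{10}\mapsto (1-x^{2n})^{10}$, so with $1+1+10=12$ one gets
\[ \sum_{\ell} b^{\infty}_{\ell} x^{\ell} = (1-x^2)\prod_{n\ge 1}\frac{1}{(1-x^{2n})^{12}}, \]
whose expansion is $1+11x^2+78x^4+430x^6+\cdots$ as claimed; the coefficients $b^{\infty}_0=1$, $b^{\infty}_1=0$, $b^{\infty}_2=11$ match Sacca's values, giving a consistency check.

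The hard part is the \emph{sharp} threshold $d>\ell+1$. A direct appeal to the rectangular stable region $a<d/2$, $b<d/2-1$ of \Cref{prop:asymptotics} only forces $\ell<d/2-1$, i.e.\ the cruder bound $d>2(\ell+1)$; improving the constant by a factor of two is the main obstacle. The natural route is to analyse the two terms of \eqref{Key equation} separately on the anti-diagonal. The Betti-dependent correction term can only contribute to ${}^{p}\widetilde{h}^{a,b}_{d}$ when $a\gtrsim d/2$: producing the monomial $p^{a-(d+1)}$ from its $\prod_m(\cdots)^{-1}$ factor requires net power $p^{-(d+1-a)}$, and since every factor of $p^{-1}$ there is accompanied by at least $q^{2}$, this costs $q^{\ge 2(d+1-a)}$, which exceeds the available $q^{d}$ unless $a\ge (d+2)/2$; meanwhile the explicit first term of \eqref{Key equation}, being a fixed power series, stabilises near the corner as $d\to\infty$. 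Turning this qualitative dichotomy into the exact bound $d>\ell+1$ — rather than controlling the numbers term by term, one would control the anti-diagonal \emph{sums} of the correction term directly, using the support of the left-hand side of \eqref{Key equation} — is the step I expect to require the most care, and it is precisely the range in which the proof of \Cref{prop:asymptotics} must be read off.
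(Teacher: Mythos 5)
Your argument is essentially identical to the paper's proof, which reads in its entirety: ``This follows from Proposition~\ref{prop:asymptotics} by setting $x=y$ and using $b_{k,d} = \sum_{i+j=k} {}^p\widetilde{h}^{i,j}_d$.'' The sharp threshold $d>i+1$ that you rightly flag as the delicate point is not justified in the paper either --- applying the stated stability region of \Cref{prop:asymptotics} to every term of the anti-diagonal sum only yields stabilisation for roughly $d>2i$ --- so your closing observation identifies an imprecision in the source rather than a defect in your own reasoning.
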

\begin{proof}
	This follows from Proposition~\ref{prop:asymptotics} by setting $x=y$ and using $b_{k,d} = \sum_{i+j=k} {^p\widetilde{h}}^{i,j}_d$.
\end{proof}

The behaviour of the extremal perverse Hodge numbers $ {^p\widetilde{h}}^{i,0}_d$ (which appear in the left most column in Tables 1-4) points to behaviour which can be observed also in Lagrangian fibrations of hyperk\"ahler manifolds \cite{SY}.
We state this as an basic question to investigate.

\begin{question}
Do we have the following evaluation?
\[ {^p\widetilde{h}}^{i,0}_d
=
\begin{cases} 1 & \text{ if } i = 0, 2,4, \ldots, 2d+2 \\
	0 & \text{ else }
\end{cases}. \]
\end{question}

\subsection{Plan of the paper}
In Setion~\ref{sec:intro motivic refined invariants} we give some background on the motivic PT invariants of a Calabi-Yau threefolds
and their relationship to Maulik-Toda theory.
In Section~\ref{sec:motivic invariants} we consider the motivic invariants of the Enriques Calabi-Yau threefolds. We first state Toda's wallcrossing formula (Theorem~\ref{thm:motivic toda formula}) and then discuss the implications for motivic PC invariants in the cases $\beta = df$ and $2 \nmid \beta$ respetively.
Section~\ref{sec:open questions} discusses two more open questions.

\subsection{Acknowledgements}
Part of this work was written during a visit of the author to the Newton Institute in Cambridge. I thank the institute for a pleasant working environment. I also thank Arkadij Bojko, Nikolas Kuhn and Junliang Shen for useful discussions on refined curve counting. The author was supported by the starting grant 'Correspondences in enumerative geometry: Hilbert schemes, K3 surfaces and modular forms', No 101041491 of the European Research Council.

\section{Motivic invariants of Calabi-Yau threefolds}
\label{sec:intro motivic refined invariants}
Let $X$ be a projective Calabi-Yau threefold and let $\beta \in H_2(X,\BZ)$ be curve class.

There are two main sheaf-theoretic constructions of curve counting invariants: Pandharipande-Thomas invariants using the moduli space of stable pairs \cite{PT}, and Maulik-Toda invariants using the moduli space $\Mbar_{\beta}$ of stable $1$-dimensional sheaves \cite{MT}.
These two sets of invariants were conjectured in \cite{MT} to be equivalent.
We first recall this conjecture, and then explain how it can be refined on both sides (following Migliorini-Shende, Maulik-Yun \cite{MigShende1,MigShende2,MaulikYun}).

\subsection{Unrefined Gopakumar-Vafa polynomials}
Let $\PT_{n,\beta} = \int_{[ P_{n,\beta}(X) ]^{\vir}} 1 \in \BQ$ be the (unrefined) stable pair invariant of $X$ and consider the generating series
\[
Z_{PT}(X) = \sum_{n,\beta} \PT_{n,\beta} (-p)^n Q^{\beta}.
\]
We define the unrefined Gopakumar-Vafa "polynomials" $\GV^{\mathrm{unref}}_{\beta}(u,p)$ by the equality
\[
\Log\, Z_{PT}(X)
=
\frac{(-p)}{(1-p)^2} \sum_{\beta} \GV^{\mathrm{unref}}_{\beta}(u,p) Q^{\beta}
\]

Conjecturally the $\GV^{\mathrm{unref}}_{\beta}(u,p)$
are Laurent polynomials invariant under $p \mapsto 1/p$, see \cite[Sec.3.4]{PT}.
For Calabi-Yau threefolds satisfying the GW/PT correspondence, this is known due to the reecnt proof of the finiteness of BPS invariants \cite{DIW}.

\begin{rmk}
The Gopakumar-Vafa invariants $n_{g,\beta}$ (as defined by PT theory) can be extracted from the  $\GV^{\mathrm{unref}}_{\beta}(u,p)$ by the expansion
\[ \GV^{\mathrm{unref}}_{\beta}(u,p) = \sum_{g} n_{g}^{\beta} (-p)^{-g} (1-p)^{2g}, \]
or equivalently,
\[
(\Log Z_{PT}(X))|_{p=-q} = \sum_{\beta} Q^{\beta}
\sum_{g} n_g^{\beta} (q^{-1/2} + q^{1/2})^{2g-2},
\]
 see \cite[Sec.3.4]{PT}.
We will not need these invariants here.
\end{rmk} 

Let $M_{\beta,1}$ be the moduli space of stable $1$-dimensional sheaves $F$ with class $\ch_2(F) = \beta$ and Euler characteristic $1$. Let $\rho : M_{\beta,1} \to \mathrm{Chow}_{\beta}$ be the map to the Chow variety.
Let $\Phi$ be the perverse sheaf on $M_{\beta,1}$ associated to a Calabi-Yau orientation data \cite{MT}. 

Maulik and Toda conjecture the following relationship:

\begin{conj}(Maulik-Toda \cite{MT}) \label{conj:MT} We have the equality:
	\[
	\GV_{\beta}^{\mathrm{unref}}(p) = 
	\sum_{i} \chi( {^p\CH^i}(R \rho_{\ast} \Phi) ) (-p)^i 
	%= \sum_{g \geq 0} n_{g}^{\beta} (q^{1/2} + q^{-1/2})^{2g}.
	\]
\end{conj}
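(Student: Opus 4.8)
The plan is to pass from the stable-pairs side to the sheaf side by wall-crossing, and then to recover the perverse grading on the sheaf side from the decomposition theorem for $\rho$. I should stress that this is the unrefined Maulik--Toda conjecture, open for general $(X,\beta)$; what follows is the strategy by which one establishes it in the accessible cases (where $\Chow_\beta$ is sufficiently understood), and which isolates the genuinely hard input.

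First I would use Toda's wall-crossing formula (Theorem~\ref{thm:motivic toda formula}) to rewrite $\Log Z_{PT}(X)$ entirely in terms of the generalized Donaldson--Thomas invariants of one-dimensional sheaves, i.e.\ the Behrend-weighted Euler characteristics of the moduli spaces $M_{\beta,n}$ as $n$ varies. The shape of that formula is engineered precisely to produce the prefactor $\frac{-p}{(1-p)^2}$ and the single moduli space $M_{\beta,1}$ in the definition of $\GV^{\mathrm{unref}}_\beta$, so that after taking $\Log$ the contribution of the class $\beta$ is governed by the Behrend function of $M_{\beta,1}$. The second ingredient is that this Behrend-weighted count equals the hypercohomology Euler characteristic of the orientation-data perverse sheaf $\Phi$: by additivity of $\chi$ across the perverse filtration one has $\sum_i \chi({}^p\CH^i(R\rho_\ast \Phi)) = \chi(M_{\beta,1},\Phi)$, so setting $p=1$ on the right of Conjecture~\ref{conj:MT} already reproduces the sheaf-counting invariant. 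The true content is therefore the $p$-dependence, namely that $p$ tracks the perverse degree of $R\rho_\ast \Phi$.

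To match the $p$-variable I would apply the relative decomposition theorem to $R\rho_\ast \Phi$ and stratify $\Chow_\beta$ by the geometric genus of the supporting curve. Over the open locus where the curve is smooth of genus $g$, the fiber of $\rho$ is a torsor under $\Pic(C)$, an abelian variety of dimension $g$, whose cohomology $\bigwedge^\bullet H^1$ distributes across the perverse degrees induced by $\rho$; relative hard Lefschetz forces the perverse summand in degree $i$ to have generic rank $\binom{2g}{i+g}$, so that the generating function $\sum_i(-p)^i$ over this stratum is $(-p)^{-g}(1-p)^{2g}$, with the Euler characteristics of the base strata absorbed into $n_g^\beta$. This is exactly the expansion $\GV^{\mathrm{unref}}_\beta(p)=\sum_g n_g^\beta(-p)^{-g}(1-p)^{2g}$ from the remark above, and it simultaneously renders the conjectural $p\mapsto 1/p$ symmetry transparent as relative hard Lefschetz. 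Assembling the strata and comparing with the wall-crossing output would yield the claimed equality.

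The hard part is twofold, and is where the general conjecture remains open. First, one needs a \emph{support theorem} for $\rho$ together with a decomposition theorem for the \emph{non-constant} perverse sheaf $\Phi$ on the singular (virtually smooth) moduli space $M_{\beta,1}$: the usual decomposition theorem applies to pushforwards of intersection complexes of smooth sources, and controlling the summands of $R\rho_\ast\Phi$ supported on the deeper strata of $\Chow_\beta$, where curves degenerate or become non-reduced and the fibers of $\rho$ are no longer Jacobians, is precisely the obstruction. Second, one must verify that the Calabi--Yau orientation data entering $\Phi$ is compatible with this fibered structure, so that the stratum-by-stratum Euler characteristics assemble coherently; this consistency is delicate and is exactly why the perverse refinement is far subtler than the numerical GW/PT correspondence.
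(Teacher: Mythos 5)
This statement is a \emph{conjecture} in the paper (due to Maulik--Toda, cited as \cite{MT}); the paper offers no proof of it, and you are right to flag at the outset that it is open in general. So there is nothing in the paper to compare your argument against line by line; what I can do is assess your sketch as a proof strategy. Your outline is the standard expected mechanism (wall-crossing to the sheaf side, then the decomposition theorem for the Hilbert--Chow map), and your identification of the hard core --- support theorems for $R\rho_{\ast}\Phi$ where $\Phi$ is a vanishing-cycle perverse sheaf rather than an intersection complex, plus compatibility of the orientation data with the fibration --- is accurate. This matches how the conjecture is verified in the accessible cases (e.g.\ irreducible classes on surface fibrations via Migliorini--Shende and Maulik--Yun, as in Example~\ref{ex:MS MY}).

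Two concrete gaps in the sketch beyond the acknowledged ones. First, your step reducing the wall-crossing output to the single moduli space $M_{\beta,1}$ is not automatic: Toda's formula expresses $\Log Z_{PT}$ in terms of generalized DT invariants $N_{n,\beta}$ for \emph{all} Euler characteristics $n$, and collapsing these to the $n=1$ space requires a $\chi$-independence statement (itself conjectural in general; the paper discusses exactly this point after Corollary~\ref{cor:DTcal in fiber classes}, citing \cite{Toda2} and \cite{MS}). Saying the formula ``is engineered precisely to produce \dots the single moduli space $M_{\beta,1}$'' hides this additional open input. Second, a sign slip: additivity of $\chi$ in the Grothendieck group of the derived category gives $\chi(M_{\beta,1},\Phi)=\sum_i(-1)^i\chi({}^p\CH^i(R\rho_{\ast}\Phi))$, with the alternating sign; as written your identity omits the $(-1)^i$, and it is only with the sign that the $p=1$ specialization of the right-hand side of Conjecture~\ref{conj:MT} reproduces the Behrend-weighted count. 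Neither issue invalidates the strategy, but both belong on the list of things that must be supplied before this becomes a proof.
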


\begin{rmk}
Equivalently one has
$\sum_{i} \chi( {^p\CH^i}(R \pi_{\ast} \Phi) ) q^i = \sum_{g \geq 0} n_{g}^{\beta} (q^{1/2} + q^{-1/2})^{2g}$.
\end{rmk}

We give two computations of Gopakumar-Vafa invariants:
\begin{example}
Let $C$ be a isolated smooth curve of genus $g$ in $X$ of class $\beta$.
Then $\chi(\CO_C) = 1-g$, so its contribution to the PT invariants is
\[ \sum_n e(\Sym^n(C)) (-1)^n q^{1-g+n} = q^{1-g} \frac{1}{(1+q)^{e(C)}} = q^{1-g} (1+q)^{2g-2}. \]
On the other hand, $M_{\beta,1}$ is smooth of dimension $g$ for sheaves supported on $C$, so we simply have $\Phi = \BQ[g]$ and
$\CH_{p}^{i}(R \pi_{\ast} \Phi) = H^{i+g}(J(C))$ and 
\[ \sum_{i} \chi( {^p\CH^{i}}(R \pi_{\ast} \Phi) ) q^i = \sum_{i=-g}^{g} \dim H^{i+g}(J(C)) q^i =
q^{-g} (1+q)^{2g}. \]
\end{example}

%\begin{example}
%For local $\p^2$, one has
%$\GV^{\mathrm{unref}}_{\beta=1} = 3$,
%$\GV^{\mathrm{unref}}_{\beta=2} = -6$,
%$\GV^{\mathrm{unref}}_{\beta=3} = 10 p^{-1} + 7 + 10 p^{-1}$,
%so  $n_{0}^{\beta=3}=27$, $n_{1}^{\beta=3} = -10$,
%see e.g. \cite{CKK}.
%% $n_{0}^{\beta=1} = 3$,
%%$n_{0}^{\beta=2} = -6$, $n_{0}^{\beta=3}=-10$, $n_{1}^{\beta=3} = 27$.
%\end{example}

\begin{example}
For the local Enriques surface $K_Y$ we
can compute the Gopakumar-Vafa polynomials by taking $\Log$ of 
\cite[Theorem 1.2]{RefinedEnriques1} and then use 
\cite[Eqn (2.1)]{RefinedEnriques1}. This gives
%\eqref{unrefined PT} and using \eqref{Exp2}. 
%This gives us:
\[
\GV_{\beta}^{\mathrm{unref}}(p)
=
\begin{cases}
a(\beta^2/2) & \text{ if } 2 \nmid \beta \\
a(\beta^2/2) - \frac{1}{2} a((\beta/2)^2/2) & \text{ if } 2 \mid \beta,
\end{cases}
\]
where
\[ \sum_{n} a(n) q^n = 
2 \prod_{\substack{m \geq 1 \\ m \text{ odd}}} \frac{1}{(1-p^{-1} q^m)^2 (1-q^m)^{4} (1-p q^m)^2}
\prod_{m \geq 1} \frac{1}{(1-q^m)^{8}}
\]
For example,   $n_1^{f} = 2$, $n_{1}^{s+f}=32$, $n_{2}^{s+f} = -4$.
In particular, $\GV_{\beta}^{\mathrm{unref}}$ and hence $n_{g}^{\beta}$ 
only depend upon $\beta$ through the square $\beta^2$ and whether $\beta$ is $2$-divisible or not.
\end{example}

\subsection{Motivic classes}
The motivic invariants we will consider will take value in the Grothendieck
ring 
$\mathsf{K}_{\text{var}}^{\widehat{\mu}}[L^{-1}]$
of varieties equipped with an action of 
groups of $n^{th}$ roots
of unity and localized at the class of the affine line $L=[\BA^1]$, see \cite{BJM}.
%\[ K^{\widehat{\mu}}_{\mathrm{var}}[L^{-1}] \]
%of varieties carrying actions of groups of $n^{th}$ roots
%of unity $\mathsf{K}_{\text{var}}^{\widehat{\mu}}$
%localized at the inverse of class of the affine line
%$L=[\BA^1]$.
In this ring $\mathsf{K}_{\text{var}}^{\widehat{\mu}}[L^{-1}]$
the class $L$ has a canonical square root given by
\[ L^{\frac{1}{2}}=1-[\mu_2,\rho] \]
whee $\rho$ is the canonical action of
$\mu_2=\{\pm1\}$ on itself, see \cite[Sec.2]{KKP}.
%The ring $\mathsf{K}^{\widehat{\mu}}_{\mathrm{var}}[L^{-1}]$ 
%therefore contains all powers of $L^{\pm 1/2}$.

The ring $\mathsf{K}_{\text{var}}^{\widehat{\mu}}[L^{-1}]$ admits natural specialization to numerical invariants.
For example, there is a Hodge specialization $\chi_{t,\tilde{t}} : \mathsf{K}_{\text{var}}^{\widehat{\mu}}[L^{-1}] \to \BQ[t^{\pm 1}, \tilde{t}^{\pm 1}]$, see \cite{BBS},
which on a smooth projective variety $V$ takes values
\[ \chi_{t,\tilde{t}}(V) = \sum_{p,q} (-1)^{p+q} h^{p,q}(V) t^p \tilde{t}^q. \]
It also satisfies
\[ \chi_{t,\tilde{t}}(L^{1/2}) = - (t \tilde{t})^{1/2}. \]

A further specialization is the Betti polynomial $B = \chi_{t,\tilde{t}}|_{\tilde{t}=1}$, so that
\[ H(V) = \chi_{t,\tilde{t}}(V)|_{t=\tilde{t}=u} = \sum_{i} b_i(V) (-u)^i, \]
or the $\chi_y$ specialization
\[ \chi_{-t}(V) = \chi_{t,\tilde{t}}(V)|_{\tilde{t}=1} = \sum_{i} (-t)^i \chi(V, \wedge^i \Omega_V). \]
In particular, we get
$H(L^{1/2}) = -u$ and $\chi_{-t}(L^{1/2}) = -t^{1/2}$.

\begin{rmk} If a motive lies in the subring generated by $L^{1/2}$, then the $\chi_{-t}$-realization and the Betti realization coincide under the variable change $t^{1/2}=u$. This happens for the motivic invariants of local $\p^2$ \cite{CKK}, and is responsible for the fact that NO-refined PT invariant for local $\p^2$ agree (conjecturally so far) with the Betti realization of the motivic PT invariants \cite[Conj.3.1]{KPS}.
\end{rmk}

\subsection{Motivic PT invariants}
We let
\[
\PT^{\mathrm{mot}}_{n,\beta} = [P_{n,\beta}(X)]^{\mathrm{vir}} \in K^{\widehat{\mu}}_{\text{var}}[L^{-1}]
\]
denote the motivic PT invariant of $X$
as defined by Bussi-Joyce-Meinhardt \cite{BJM}.
We will also use motivic generalized DT invariants as defined in \cite{BBBJ}.
These invariants depend on a choice of orientation data, which was recently constructed in a canonical way by \cite{JU}.
The orientation data of \cite{JU} is moreover 'compatible under direct sums', which implies that we have a motivic wall-crossing formulas in the sense of Kontsevich-Soibelman \cite{KS}. 

Since we assumed $X$ to be compact, the Euler characteristic specialization of the motivic PT invariant is the (unrefined) PT invariant \cite{BJM}:
\[
\chi( \PT^{\mathrm{mot}}_{n,\beta}  ) = \PT_{n,\beta}.
\]
% we obtain the Behrend function:
%\[
%\chi([M]^{\vir}) = H([M]^{\vir})|_{u=1} = (-1)^{\dim(M)} \chi(M).
%\]

We will use the following special case: If $M$ is a smooth moduli space of stable sheaves on $X$ and the critical structure is given by the smooth ambient space $M$ with critical function $f=0$, then we have the motivic invariant
%
%Let $M$ be a moduli space which is a critical locus of a function.
%There is an associated motive $[M]^{\vir}$.
%If $M$ is smooth, and we take the function $f=0$ on $M$, then 
\begin{equation} \label{Mvir for M smooth}
	 [ M ]^{\vir} = L^{-\dim(M)/2} [M]. 
 \end{equation}
Since $H(L^{1/2}) = -u$, this yields the Betti realization is
\[ H([M]^{\vir}) = \sum_{i=0}^{2 \dim(M)} b_i(M) (-u)^{i-\dim(M)}. \]

We also have the wall-crossing factor
	$\chi_{t, \tilde{t}}([ \p^{n-1} ]^{\vir}) = (-1)^{n-1} [ n ]_{t \tilde{t}}$.

\subsection{Refined Gopakumar-Vafa polynomials}
%Let $\PTmot_{n,\beta}$ be the motivic stable pair invariant.
Consider the Betti realization of the generating series of motivic PT invariants:
\[ Z_{PT}^{\mathrm{Betti}}(X) = \sum_{n,\beta} H(\PT^{\mathrm{mot}}_{n,\beta}) (-p)^n Q^{\beta}. \]
The refined Gopakumar-Vafa "polynomials" $\GV_{\beta}(u,p)$ are defined by
\[
\Log Z_{PT}^{\mathrm{Betti}}(X)
=
\frac{(-p)}{(1-u^{-1}p)(1-up)} \sum_{\beta} \GV_{\beta}(u,p) Q^{\beta}
\]

Work of Migliorini-Shende \cite{MigShende1, MigShende2} and Maulik-Yun \cite{MaulikYun} (see Example~\ref{ex:MS MY}) showed how the connection to Maulik-Toda theory can be refined.
Consider again the moduli space $M_{\beta,1}$ and the Hilbert-Chow map $\rho : M_{\beta,1} \to \Chow_{\beta}$. 
We define the perverse Hodge numbers
\[ {^ph^{ij}_{\beta}} := 
h^j( {^p\CH^{i}}(R \pi_{\ast} \Phi) ).
\]

\begin{conj}[{\cite{MigShende1, MigShende2, MaulikYun}}]\footnote{The author learned this conjecture from discussions with J. Shen.}
	\label{conj:refined MT}
\begin{align*} 
	\mathrm{GV}_{\beta}(u,p)
	& =
	\sum_{i,j} {^ph}^{i,j}_{\beta} (-1)^{i+j} p^i u^j
\end{align*}
\end{conj}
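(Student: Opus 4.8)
The plan is to follow the strategy of Migliorini--Shende \cite{MigShende1,MigShende2} and Maulik--Yun \cite{MaulikYun} (compare Example~\ref{ex:MS MY}), which reduces the refined Maulik--Toda statement to a relative comparison over the Chow variety. The starting point is that both moduli spaces carry compatible maps to $\Chow_{\beta}$: the Hilbert--Chow morphism $\rho : M_{\beta,1} \to \Chow_{\beta}$, and the support map $\pi_P : P_{n,\beta}(X) \to \Chow_{\beta}$ sending a stable pair to the fundamental cycle of its underlying curve. Over the open locus $U \subset \Chow_{\beta}$ of integral locally planar curves, $\rho$ restricts to the relative compactified Jacobian, while $\pi_P$ restricts to the relative Hilbert scheme of points $\mathcal{C}^{[m]} \to U$ (with $m = n + g - 1$, where $g$ is the arithmetic genus). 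The goal is therefore to match the perverse-filtered cohomology of these two relative spaces, the perverse filtration on both sides being induced by the maps to $\Chow_{\beta}$.

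First I would establish the local, fibrewise comparison. For a single integral locally planar curve $C$, the Migliorini--Shende ``Macdonald formula'' expresses the generating series of the perverse-filtered cohomology of the Hilbert schemes $C^{[m]}$ in terms of the cohomology of the compactified Jacobian $\overline{J}(C)$ together with its perverse filtration. In families this upgrades, through the decomposition theorem, to an identity of perverse sheaves on $U$, in which the summands of $\bigoplus_m (R\pi_{P\ast}\BQ)\,p^m$ are governed by those of $R\rho_{\ast}\BQ$. The prefactor $\tfrac{(-p)}{(1-u^{-1}p)(1-up)}$ entering the definition of $\GV_{\beta}$ encodes exactly the refined contribution of points moving in the smooth locus of the curves, and the operation $\Log$ strips off precisely these multiplicative local contributions; what remains is the primitive piece matching $\sum_{i,j}{}^p h^{i,j}_{\beta}(-1)^{i+j}p^i u^j$.

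Next I would globalise from $U$ to all of $\Chow_{\beta}$ by means of a support theorem: one must show that every summand in the decomposition theorem for $R\rho_{\ast}\Phi$ is fully supported, so that no extra summands appear over the discriminant strata of non-integral curves. In the generic Enriques setting with $2\nmid\beta$ this is considerably simplified by the fact (used throughout the paper) that $M_{\beta}$ is smooth, so that $\Phi = \BQ[\dim M_{\beta}]$ is the constant sheaf and ${}^p h^{i,j}_{\beta}$ is literally the graded dimension of the perverse filtration on $H^{\bullet}(M_{\beta},\BQ)$. The Betti realisation of $[M_{\beta}]^{\vir} = L^{-\dim/2}[M_{\beta}]$ via \eqref{Mvir for M smooth} then feeds directly into the pairs side, and the Calabi--Yau property $\omega_{M_{\beta}} \cong \CO$ together with the relative dimension count for $\rho$ supplies the symmetry needed to fix the normalisation.

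The hard part will be twofold. The deeper obstacle is the support theorem over the full linear system: the members of $|\beta|$ include reducible and, when $2\mid\beta$, non-reduced curves, for which the techniques of \cite{MigShende1,MaulikYun} --- developed for reduced planar curves --- do not directly apply, and one expects genuinely new summands supported on the discriminant. The second obstacle, relevant once $M_{\beta,1}$ is singular, is to prove that the motivic DT--PT wall-crossing of Toda (Theorem~\ref{thm:motivic toda formula}) is compatible with the perverse filtration and with the vanishing-cycle sheaf $\Phi$ attached to the canonical orientation data of \cite{JU}. This perverse-filtered refinement of wall-crossing is currently the least understood ingredient, and I expect it to be where the main difficulty lies.
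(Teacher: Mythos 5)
The statement you are trying to prove is stated in the paper as a \emph{conjecture} (attributed to Migliorini--Shende, Maulik--Yun, and discussions with J.~Shen); the paper offers no proof of it, only supporting evidence: the case of a smooth isolated curve and the case of a family of integral planar curves with smooth relative compactified Jacobian, both worked out in Example~\ref{ex:MS MY}. So there is no argument in the paper to compare yours against, and your text should not be read as closing the statement.

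As a strategy outline, what you write is consistent with the evidence the paper does give: the fibrewise Macdonald-type formula of \cite{MigShende1,MaulikYun}, its sheaf-theoretic upgrade over the locus of integral locally planar curves, and the role of the prefactor $\tfrac{(-p)}{(1-u^{-1}p)(1-up)}$ and of $\Log$ in stripping off the contribution of points on the smooth locus --- this is exactly the computation carried out in Example~\ref{ex:MS MY}. You also correctly locate the two genuine obstructions: (i) a support theorem over the full Chow variety, where non-integral (and for $2\mid\beta$ non-reduced) members of the linear system can contribute summands supported on the discriminant, outside the reach of the reduced-planar-curve techniques; and (ii) compatibility of the motivic DT/PT wall-crossing with the perverse filtration and with the vanishing-cycle sheaf $\Phi$ for the canonical orientation data. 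Since you yourself flag both of these as open, your proposal is an honest reduction of the conjecture to two unproved inputs, not a proof. One smaller caution: the smoothness of $M_{\beta}$ and the identity $\Phi=\BQ[\dim M_{\beta}]$ that you invoke hold in the paper's specific Enriques setting with $Y$ generic and $2\nmid\beta$, whereas Conjecture~\ref{conj:refined MT} is stated for an arbitrary projective Calabi--Yau threefold and arbitrary $\beta$; in that generality $M_{\beta,1}$ is singular and even the unrefined Maulik--Toda Conjecture~\ref{conj:MT}, which this refines, is open.
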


\begin{rmk} This conjecture refines the Maulik-Toda Conjecture~\ref{conj:MT}. Indeed, setting $u=1$ we get
\[
\mathrm{GV}_{\beta}(u=1,p) 
%= \sum_{i} (-p)^i \sum_{j} (-1)^j h^j( {^p\CH^{i}}(R \pi_{\ast} \Phi) )
=
\sum_{i} (-p)^i \chi( {^p\CH^{i}}(R \pi_{\ast} \Phi) )
=
\mathrm{GV}^{\mathrm{unref}}_{\beta}(p)
\]
\end{rmk}

\begin{example} \label{ex:MS MY}
Let $C$ be a smooth isolated genus $g$ curve on $X$.
The contribution of $C$ to the PT generating series in class $[C]$ is
\begin{align*}
\sum_{n \geq 0} 
H([\Sym^n(C)]^{\vir}) (-p)^{1-g+n}
%& = (-p)^{1-g} \sum_{n \geq 0} H(L^{-n/2} [C^{(n)}]) (-p)^n \\
& = (-p)^{1-g} \sum_{n \geq 0} (u^{-1} p)^n \sum_{i} b_i(C^{(n)}) (-u)^i \\
& = \frac{(-p)^{1-g} (1-p)^{2g}}{(1-u^{-1}p)(1-up)}.
\end{align*}
%q^{1-g} \frac{(1-q)^{2g}}{(1-u^{-1} q) (1-u q)}
%\]
Hence we get
\[ \GV_{[C]} = (-p)^{-g} (1-p)^{2g} = q^{-g} (1+q)^{2g} = (q^{-1/2} + q^{1/2})^{2g}, \quad q=-p, \]
which matches the Betti numbers of the Jacobian of a genus $g$ curve.
\end{example}
\begin{example}
Let $\pi : \CC \to B$ be a family of integral plane curves of genus $g$ over a smooth base $B$. Assume that the relative compactified Jacobian $\rho : \overline{JC}=M \to B$ is smooth. Let $\pi^{[d]} : \CC^{[d]} \to B$ be the $d$-th relative Hilbert scheme of points, which is then also smooth.
Under these conditions, Migliorini-Shende and Maulik-Yun \cite{MigShende1,MaulikYun} showed that
(see \cite[page 4]{MigShende2} for the precise form) 
\[
\sum_{d=0}^{\infty} q^d
R \pi^{[d]}_{\ast} \BQ[g+\dim(B)] = 
\frac{\sum_{i} q^i \ {^p\CH^{i-g}}(R \rho_{\ast} \BQ[\dim M]) [g-i]}{(1-q ) (1-qL) } .
\]
%where $[ - ]$ is the cohomological shift and $( - )$ is the Tate-twist.

\begin{comment} 
\begin{rmk}
By \cite{MigShende2} one has
\[
\sum_{d=0}^{\infty} q^d
R \pi^{[d]}_{\ast} \BQ = 
\frac{\sum_{i} q^i \ {^p\tilde{\CH}^{i}}(R \rho_{\ast} \BQ) [-i]}{(1-q) (1-q L) } 
\]
where ${^p\tilde{\CH}^{i}}$ is the convention of \cite{MigShende2}, which satisfies ${^p\tilde{\CH}^{i}}(F) = {^p\CH^{i+\dim(B)}}(F)[ - \dim(B) ]$ with respect to the usual convention.
(For once, if $F$ is a local system, then
${^p\tilde{\CH}^{i}}(F) = F$ if $i=0$ and $0$ otherwise.
On the other hand,
${^p\CH^{i}}(F[\dim B]) = F[\dim B]$ if $i=0$ and zero otherwise.)
Also note that ${^p\CH^{i}}(F) = {^p\CH^{i-j}}(F[j])$. Combining these both statements reduces to the above equation to the previously claimed equation.
\end{rmk}
\end{comment}

Taking cohomology and Betti numbers one finds
\[
\sum_{d=0} q^d \sum_i (-u)^i b_{i+g+\dim(B)}(\CC^{[d]})
=
\sum_{d=0} q^d (-u)^{-g-\dim(B)} H( \CC^{[d]} )
=
\frac{\sum_{i} q^i \sum_{j} (-u)^j {^ph}^{i-g,j+g-i}_{\beta} }{(1-q) (1-q u^2) },
\]
and hence
\begin{align*}
\sum_{d=0} (-p)^{d+1-g} H( [\CC^{[d]}]^{\vir} )
& =
(-p)^{1-g} (-u)^g \sum_{d} (u^{-1} p)^d (-u)^{-g-\dim(B)} H(\CC^{[d]}) \\
& =
(-p)^{1-g} (-u)^{g} \frac{\sum_{i,j} (u^{-1}p)^i (-u)^j {^ph}^{i-g,j+g-i}_{\beta} }{(1-u^{-1} p)(1-up)} \\
& =
\frac{(-p)}{(1-u^{-1}p)(1-up)}
\sum_{i,j} p^i u^j (-1)^{i+j} {^ph}^{i,j}.
\end{align*}

For statements involving also reducible curves, see \cite{MigShende2}.
\end{example}

%\begin{example}
%Let $X=\mathrm{Tot}(K_{\p^2})$. In degree $1$, the moduli space $P_{n,\beta}$ is a $\p^{n-1}$-bundle over the linear system $\p^2$, so we get
%\[ [ P_{n,\beta} ]^{\vir} = L^{-(n+1)/2} [\p^2] [\p^{n-1}] = [\p^2]^{\vir} [(\p^1)^{(n-1)}]^{\vir}. \]
%Thus we get
%\[
%\sum_{n \geq 0} (-p)^n H(  [ P_{n,\beta} ]^{\vir} ) = H( [\p^2]^{\vir}) \cdot
%\frac{(-p)}{(1-u^{-1}p)(1-up)},
%\]
%and hence $\GV_{\beta=1}(u,p) = u^{-2} + 1 + u^2$.
%See \cite{CKK} for more computations.
%%which has Poincare polynomial
%%\[ H(  [ P_{n,\beta} ]^{\vir} ) = (-u)^{-(n+1)} (1+u^2 + u^4) (1 + u^2 + \ldots + u^{2(n-1)} \]
%\end{example}

\begin{rmk}
	Following Choi-Katz-Klemm \cite{CKK},
	one can also define refined Gopakumar-Vafa invariants $N_{j_L, j_R}^{\beta}$ by the equation
	\begin{align*}
		& (\Log Z_{PT}^{\mathrm{Betti}}(X)) \\
		= & \frac{-p}{(1 -up) (1 - u^{-1}p)}
		\sum_{\beta} Q^{\beta} \sum_{j_L, j_R} N_{j_L,j_R}^{\beta} (-1)^{2 (j_L+j_R)}
		\sum_{m_R=-j_R}^{j_R} u^{-2m_R} \sum_{m_L} p^{-2 m_L} 
		%= &
		%\frac{q}{(1 + q u) (1 + q u^{-1})}
		%\sum_{\beta} Q^{\beta} \sum_{j_L, j_R} N_{j_L,j_R}^{\beta} (-1)^{2 (j_L+j_R)}
		%\sum_{m_R=-j_R}^{j_R} u^{-2m_R} \sum_{m_L} (-q)^{-2 m_L} \\
	\end{align*}
	We will not need this notion here.
	\qed
\end{rmk}

\section{Motivic invariants of the Enriques Calabi-Yau threefold}
\label{sec:motivic invariants}
Let $Y$ be a generic Enriques surface and consider the Enriques Calabi-Yau threefold
\[ Q = (X \times E)/\langle \tau \times (-1) \rangle,  \]
where $\tau : X \to X$ is the involution on the covering K3 surface.

For every $2$-torsion point $a \in E$,
% we have the inclusion $\iota_{Q,a} : Y \cong (X \times a)/\BZ_2 \hookrightarrow Q$. 
the pushforward in cohomology 
\[ \iota_{Q, a} : H^{\ast}(Y,\BZ) \to H^{\ast}(Q,\BZ) \]
is independent of $a$ modulo torsion. Hence we drop $a$ from notation, and often view a cohomology class $v \in H^{\ast}(Y,\BZ)$ as a cohomology class on $Q$ via pushforward along $\iota_{Q}$,  $v= \iota_{Q\ast} v \in H^{\ast}(Q,\BZ)$.
% Hence we will henceforth view cohomology classes (modulo torsion) on $Y$ as classes on $Q$.
For $\beta \in H_2(Y,\BZ)$, we let $\PT_{n,\beta}^{\mathrm{mot}}$ be the motivic PT invariant of $Q$, and for $v \in H^{\ast}(Y,\BZ)$, we let
$\DT^{Q,\mathrm{mot}}(v)$ be the generalized motivic Donaldson-Thomas invariant of $Q$ counting semi-stable sheaves $F$ on $Q$ with Chern character $\ch(F) = v$. 
%(by which we mean $\ch(F) = \iota_{Q,a \ast} v$, 
In particuclar, $F$ is supported on fibers of $\pi : Q \to E/\BZ_2$.

We denote the $\chi_{t, \tilde{t}}$ specializations of these motivic invariants by
\begin{gather*}
	\PTcal_{n,\beta} = \chi_{t,\tilde{t}}(\PT_{n,\beta}^{Q, \mathrm{mot}}) \\
	\DTcal(v) = \chi_{t,\tilde{t}}(\DT^{Q,\mathrm{mot}}(v)).
\end{gather*}

We will also consider "BPS invariants" $\Omega(v)$ defined by
\[
\DTcal(v) = \sum_{\substack{k|v \\ k \geq 1}} \frac{1}{k [k]_{t \tilde{t}}} \Omega(v/k)|_{t \mapsto t^k, \tilde{t} \mapsto \tilde{t}^k}.
\]
The classes $\Omega(v)$ are expected to have good integrality properties \cite{JS}.

\subsection{Structure results} \label{subsec:structure results}
Parallel to \cite[Sec.4.2]{RefinedEnriques1} we have the following two structure results.

\begin{thm}[Motivic Toda's equation] \label{thm:motivic toda formula} \begin{align*}
\sum_{\beta} \sum_{n \in \BZ} \PTcal_{n,\beta} (-p)^n Q^{\beta}
=& \prod_{\substack{r \geq 0 \\ \beta > 0 \\ n \geq 0 }} \exp\left( (-1)^{r-1} [n+r]_{t \tilde{t}} \DTcal(r,\beta,n) Q^{\beta} p^n \right) \\
 \times&  \prod_{\substack{r>0 \\ \beta>0 \\ n>0}} \exp\left( (-1)^{r-1} [n+r]_{t \tilde{t}} \DTcal(r,\beta,n) Q^{\beta} p^{-n} \right).
 \end{align*}
\end{thm}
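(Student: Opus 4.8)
The plan is to obtain this identity as the $\chi_{t,\tilde{t}}$-realization of the motivic wall-crossing formula of Kontsevich--Soibelman \cite{KS}, following the structure of the argument in \cite[Sec.4.2]{RefinedEnriques1}, where the analogous statement was proved for the Nekrasov--Okounkov ($K$-theoretic) refinement; the only change is to replace the $K$-theoretic integration map by the Hodge realization of the motivic Hall algebra. First I would fix the abelian category: since $\beta$ is a fiber class, the relevant objects are the sheaves on $Q$ supported set-theoretically on the fibers of $\pi$ (equivalently, sheaves on a fiber surface, of dimension $\le 2$, carrying the rank $r$, the class $\beta$ and an Euler-characteristic datum $n$), together with the framing sheaf $\CO_Q$. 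On this category I would use Toda's weak stability conditions, whose two limiting chambers are the PT chamber, in which the stable objects are the stable pairs $[\CO_Q \xrightarrow{s} F]$ counted by $\PTcal_{n,\beta}$, and the chamber in which $\CO_Q$ decouples and one counts $\pi$-supported semistable sheaves of class $(r,\beta,n)$ weighted by $\DTcal(r,\beta,n)$.

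Next I would lift both sides to the motivic Hall algebra of this category and apply the integration morphism $I$ to the motivic quantum torus. The Hall-algebra element parametrizing all stable pairs maps under $I$ to the left-hand side $\sum_{n,\beta}\PTcal_{n,\beta}(-p)^n Q^\beta$, while the element assembled from the semistable-sheaf substacks maps to the right-hand double product. The identity relating these two elements inside the Hall algebra is the Harder--Narasimhan reordering across the wall, and applying the ring homomorphism $I$ converts it into the asserted equality. That $I$ is a ring homomorphism to the quantum torus is precisely where the Kontsevich--Soibelman formula is used, and it is available here because the Joyce--Upmeier orientation data \cite{JU} is compatible under direct sums, as recorded in the introduction.

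It then remains to compute the quantum-torus product and read off the combinatorial factors. Using the wall-crossing factor $\chi_{t,\tilde{t}}([\p^{m-1}]^{\vir}) = (-1)^{m-1}[m]_{t\tilde{t}}$ recorded above with $m=n+r$, each sheaf class $(r,\beta,n)$ contributes the virtual motive of the projective space $\p^{n+r-1}$ of sections by which $\CO_Q$ can destabilize a sheaf of that class; the dimension $n+r-1$ is forced by the Euler pairing, since on a fiber-supported sheaf $F$ one has $\chi(\CO_Q,F)=\chi(F)=n+r$ by Riemann--Roch on the (Enriques) fiber. Concretely $\chi_{t,\tilde{t}}([\p^{n+r-1}]^{\vir})\,(-p)^n = (-1)^{r-1}[n+r]_{t\tilde{t}}\,p^n$, which reproduces both the coefficients and the passage from $(-p)^n$ on the left to $p^n$ on the right. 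The two products --- differing in their ranges and in $p^n$ versus $p^{-n}$ --- assemble respectively the non-negative and the negative powers of $p$ in the PT Laurent series, reflecting the $n \leftrightarrow -n$ symmetry of the DT invariants.

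The main obstacle, as I see it, is the foundational input that $I$ is genuinely a ring homomorphism into the motivic quantum torus in this setting: one needs the moduli stacks of fiber-supported sheaves and of pairs to be locally critical loci whose orientation data restricts compatibly, so that the virtual motives multiply exactly as the Kontsevich--Soibelman formula demands. A second, more technical point --- already present in Toda's unrefined derivation --- is that infinitely many walls separate the two chambers, so the product must be shown to converge; this is controlled by the boundedness of the relevant families of semistable sheaves supported on the fibers. Finally I would match $\ch(F)$ on $Q$ with the triple $(r,\beta,n)$ and check that the Euler-characteristic specialization $t,\tilde{t}\to 1$ recovers Toda's original unrefined equation, as a consistency test on the signs and normalizations.
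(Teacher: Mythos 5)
Your proposal is correct and follows essentially the route the paper intends: the paper itself offers no written proof of Theorem~\ref{thm:motivic toda formula} beyond declaring it "parallel to" the $K$-theoretic version in \cite[Sec.4.2]{RefinedEnriques1}, and what you describe --- Toda's chamber structure for fiber-supported pairs, the motivic Hall-algebra integration map made into a ring homomorphism by the direct-sum-compatible Joyce--Upmeier orientation data, and the factor $\chi_{t,\tilde t}([\p^{n+r-1}]^{\vir})=(-1)^{n+r-1}[n+r]_{t\tilde t}$ with $\chi(F)=n+r$ from Riemann--Roch on the Enriques fiber --- is exactly that argument transported to the Hodge realization. Your sign check $(-1)^{n+r-1}[n+r]_{t\tilde t}\,(-p)^n=(-1)^{r-1}[n+r]_{t\tilde t}\,p^n$ and your identification of the genuine foundational issues (the d-critical/orientation compatibility and convergence across infinitely many walls) are both accurate.
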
 

In the following statement we refer to \cite[Thm.4.4]{RefinedEnriques1} 
for the definition of the type.
\begin{thm}[Dependence of DT invariants] \label{thm:dependence of DT invariants}
The invariant $\DTcal(v)$ only depends upon $v=(r,\beta,n)$ through the square $v^2=\beta^2-2rn-r^2$, the divisibility $\mathrm{div}(v)$ and the type of $v$.
\end{thm}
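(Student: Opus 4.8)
The plan is to follow the proof of the unrefined analogue \cite[Thm.4.4]{RefinedEnriques1} and to upgrade each step to the motivic level. Write $\Gamma$ for the lattice of Chern characters $v=(r,\beta,n)$ of the fiber-supported objects, equipped with the quadratic form $v^2 = \beta^2 - 2rn - r^2$. In \cite[Thm.4.4]{RefinedEnriques1} one produces a group $G$ of derived autoequivalences of $D^b(Q)$ preserving the subcategory of objects supported on fibers of $\pi$ (arising from $\tau$-equivariant autoequivalences of the covering K3 together with line-bundle twists), whose induced isometries of $(\Gamma, v^2)$ generate a subgroup of $O(\Gamma)$ acting transitively on the classes of fixed square, divisibility and type. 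Since the classification of these orbits is purely arithmetic, I would import it verbatim from \cite{RefinedEnriques1}; the only genuinely new point is to establish the $G$-invariance of the refined invariant $\DTcal(v)$.

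The generators of $G$ I would use are tensoring by line bundles (pulled back from $Y$ along $\iota_Q$, or from the base along $\pi$) together with a relative Fourier--Mukai transform and a spherical twist along the isotrivial K3 fibration $\pi$. Each $\Phi \in G$ induces, for appropriate stability conditions $\sigma$ and $\Phi_*\sigma$, an isomorphism of moduli stacks of semistable objects $M_\sigma(v) \xrightarrow{\sim} M_{\Phi_*\sigma}(\Phi v)$; because the objects are supported on fibers, this comparison is fiberwise the well-understood K3 picture. I would then deduce $\DTcal(v)=\DTcal(\Phi v)$ in two steps. First, the isomorphism of stacks identifies the two refined invariants once it is shown to identify the $d$-critical structures together with their canonical Joyce--Upmeier orientation data \cite{JU}. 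Second, passing from the transformed stability $\Phi_*\sigma$ back to the fixed Gieseker stability defining $\DTcal(\Phi v)$ is harmless, since the motivic generalized DT invariants are independent of the chamber by the motivic wall-crossing formula \cite{BBBJ,KS}, which is available precisely because the Joyce--Upmeier orientation is compatible with direct sums.

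The main obstacle is the first step: showing that the autoequivalences preserve the virtual motive, i.e. that the induced stack isomorphism is compatible with the orientation data. Because the motivic invariants are genuinely not deformation invariant, I cannot, as in the unrefined case, deform $Q$ and combine monodromy with deformation invariance; I must instead prove invariance under the honest autoequivalences of the fixed threefold. The canonicity and functoriality of the construction \cite{JU} should render the orientation data insensitive to applying a Fourier--Mukai equivalence, since such an equivalence identifies the relevant stacks together with their $(-1)$-shifted symplectic structures; but making this compatibility precise for each generator, and in particular tracking the square-root class $L^{1/2}$ in the virtual motive, is the technical heart of the argument and the step I expect to require the most care.

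Finally, it suffices to verify invariance for the explicit generators: tensoring by a line bundle is an isomorphism on the nose and manifestly preserves every structure in sight, so the work concentrates on one Fourier--Mukai / spherical-twist generator. Combining the resulting $G$-invariance of $\DTcal$ with the orbit classification recorded in \cite[Thm.4.4]{RefinedEnriques1} then gives the claim, that $\DTcal(v)$ depends on $v$ only through the square $v^2$, the divisibility $\mathrm{div}(v)$, and the type.
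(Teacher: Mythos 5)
The paper does not actually prove this theorem: it is stated without proof, introduced only by the phrase that the two structure results are ``parallel to'' \cite[Sec.4.2]{RefinedEnriques1}, with \cite[Thm.4.4]{RefinedEnriques1} cited only for the definition of the type. So there is no argument in the text to compare yours against, and your proposal should be judged as a free-standing attempt. As such, it is a reasonable and well-organized \emph{strategy} -- reduce to invariance of $\DTcal$ under a group of autoequivalences plus a wall-crossing step, and import the orbit classification from the unrefined companion result -- but it is not a proof. You say so yourself: the identification of the $d$-critical structures together with their canonical Joyce--Upmeier orientation data under the Fourier--Mukai/spherical-twist generators, and the resulting matching of virtual motives including the $L^{1/2}$ normalization, is ``the technical heart'' and is left entirely open. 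Since that is precisely the point where the motivic statement differs from the unrefined one, the proposal currently contains the full difficulty of the theorem as an unproved assertion.

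There is a second, unacknowledged gap. You propose to import the orbit classification ``verbatim'' from \cite[Thm.4.4]{RefinedEnriques1}, on the premise that the transitivity on classes of fixed square, divisibility and type is achieved there by a group $G$ of autoequivalences of the fixed threefold $Q$. But the unrefined argument has deformation invariance of DT invariants at its disposal, and transitivity statements of this kind on an Enriques lattice are typically obtained by combining autoequivalences with deformations and monodromy (Eichler-type arguments). The paper itself stresses in the introduction that the motivic invariants are \emph{not} deformation invariant, so any portion of the transitivity that is supplied by deformation/monodromy rather than by honest autoequivalences of the fixed $Q$ does not transfer to your setting. Before the orbit classification can be reused, you must verify that the image of your group $G$ in $O(\Gamma)$ alone -- without deformations -- acts transitively on the classes of fixed square, divisibility and type; otherwise the reduction to checking invariance on generators does not establish the stated dependence.
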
 

We can rewrite Toda's equation in terms of the BPS classes $\Omega(v)$:
\begin{equation} \label{Log toda eqn}
\begin{aligned}
\Log\left( \sum_{\beta} \sum_{n \in \BZ} \PTcal_{n,\beta} (-p)^n Q^{\beta} \right) = \\
\sum_{r,n \geq 0, \beta>0} (-1)^{r-1} [n+r]_{t \tilde{t}} \Omega(r,\beta,n) Q^{\beta} p^{n} 
& - \sum_{\substack{v=(r,\beta,n)\\  2|v, r/2 \text{ odd} \\
		r,n \geq 0, \beta>0}} \left[ \frac{n+r}{2} \right]_{(t \tilde{t})^2} \Omega(v/2)|_{t \mapsto t^2, \tilde{t} \mapsto \tilde{t}^2} Q^{\beta} p^n \\
+ \sum_{r,n > 0, \beta>0} (-1)^{r-1} [n+r]_{t \tilde{t}} \Omega(r,\beta,n) Q^{\beta} p^{-n} 
& - \sum_{\substack{v=(r,\beta,n)\\  2|v, r/2 \text{ odd} \\
		r,n >0, \beta>0}} \left[ \frac{n+r}{2} \right]_{(t \tilde{t})^2} \Omega(v/2)|_{t \mapsto t^2, \tilde{t} \mapsto \tilde{t}^2} Q^{\beta} p^{-n}.
\end{aligned}
\end{equation}

We will consider these equations below in two examples:
\begin{enumerate}
	\item Fiber classes $\beta = df$ for $f$ a half-fiber.
	\item $2 \nmid \beta$.
\end{enumerate}

\subsection{Fiber curve classes} \label{subsec:fiber curve classes}
Let $Y \to \p^1$ be an elliptic fibration on $Y$ with half-fiber $f$.
For curve class $\beta=df$
we have an isomorphism of moduli spaces, compare
\cite[Sec.4.5]{RefinedEnriques1}:
\[
P_{df,0}(Q) \cong \Hilb^n([\p^1 \times E]/\BZ_2) = \Hilb^n(\p^1 \times E)^{\BZ_2}.
\]
where $\BZ_2$ acts on $\p^1 \times E$ as a product of an involution on $\p^1$ and multiplication by $-1$ on $E$. In particular, $P_{df,0}(Q)$ is smooth. We also assume below that its critical struture is trivial (so we are in the situation \eqref{Mvir for M smooth}).
Then we have the following computation:
%This allows us to compute the motivic invariants in this case:
\begin{prop} \label{prop:PTcal for fiber classes}
\[ \sum_{d \geq 0} \PTcal_{df,0} q^d
=
\prod_{m \geq 1} \frac{(1-q^{2m})^{6}}{(1-(t \tilde{t})^{-1} q^{2m}) (1-q^{m})^{8} (1- t \tilde{t} q^{2m})}. \]
\end{prop}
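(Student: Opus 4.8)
The plan is to reduce the proposition to a motivic identity for the $\BZ_2$-fixed Hilbert schemes of $S=\p^1\times E$, and then to evaluate the resulting generating series by a fixed-point stratification.

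First I would feed the isomorphism $P_{df,0}(Q)\cong\Hilb^d(S)^{\BZ_2}$ (here $\BZ_2=\langle\iota\rangle$ with $\iota=\sigma\times(-1)_E$ and $\sigma$ the involution of $\p^1$) together with the assumed triviality of the critical structure into \eqref{Mvir for M smooth}. Since the fixed locus is smooth but in general not equidimensional, this gives
\[ \PTcal_{df,0}=\chi_{t,\tilde t}\bigl([\Hilb^d(S)^{\BZ_2}]^{\vir}\bigr),\qquad [\Hilb^d(S)^{\BZ_2}]^{\vir}=\sum_{C}L^{-\dim C/2}[C], \]
the sum running over the connected components $C$. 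The statement thus becomes a purely motivic identity for the series $\sum_d\chi_{t,\tilde t}([\Hilb^d(S)^{\BZ_2}]^{\vir})q^d$ of normalized Hodge--Deligne polynomials. Only $y:=t\tilde t$ can survive, because $\sigma$ acts trivially on $H^\ast(\p^1,\BQ)$ while $(-1)_E$ acts by $-1$ on $H^1(E,\BQ)$, so $\iota^\ast$ is trivial on the $\iota$-invariant cohomology that controls the Hodge weights.

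Next I would describe the geometry of the fixed locus. The involution $\iota$ has exactly $8$ isolated fixed points $\Fix(\sigma)\times E[2]$ ($2$ points of $\p^1$ times the $4$ two-torsion points of $E$), and acts as $-\Id$ on each tangent plane $T_pS$. I would stratify $\Hilb^d(S)^{\BZ_2}$ by how an invariant subscheme is distributed among the $8$ fixed points and the free $\iota$-orbits. On the free locus invariant subschemes are pulled back from the smooth quotient, and $\bar S=S/\iota$ is a surface with $8$ ordinary double points whose minimal (crepant) resolution $\widetilde{\bar S}$ is a smooth rational (elliptic) surface with $\chi_{t,\tilde t}(\widetilde{\bar S})=1+10y+y^2$, the $h^{1,1}=10$ being $2$ from $H^\ast(S)^{\iota}$ plus $8$ exceptional curves. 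The key geometric point is that the strata glue: when a free pair $\{a,\iota a\}$ collides at a fixed point it limits onto the length-two punctual scheme there, so the exceptional $\p^1$'s of $\widetilde{\bar S}$ arise exactly as those punctual loci. The first coefficients then pin down the structure: for $d=1$ one has $\Hilb^1(S)^{\BZ_2}=\Fix(\iota)$, giving $\PTcal_{f,0}=8$; for $d=2$ the fixed locus is $\widetilde{\bar S}\sqcup P$, with $P$ the $\binom{8}{2}=28$ pairs of distinct fixed points, so
\[ \PTcal_{2f,0}=y^{-1}\chi_{t,\tilde t}(\widetilde{\bar S})+28=y^{-1}(1+10y+y^2)+28=38+y+y^{-1}, \]
in agreement with the $q^2$-coefficient of the claimed product. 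I would then assemble the full series along the lines of an orbifold/equivariant G\"ottsche formula for $[S/\BZ_2]$: an \emph{untwisted} contribution governed by the resolution $\widetilde{\bar S}$, entering through $q^2$ because free orbits have even length on $S$, together with \emph{twisted} contributions localized at the $8$ fixed points, which should contribute the Hodge-weight-trivial factor $\prod_{m\ge1}(1-q^m)^{-8}$.

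The main obstacle is to turn this sector picture into a proof of the product formula for all $d$ simultaneously. Two points require care: computing the \emph{twisted local series} at a $-\Id$ fixed point, i.e. the $\iota$-equivariant punctual Hilbert series of $\BA^2$ with its normalized Hodge weights; and handling the component-wise normalization $L^{-\dim C/2}$ across strata that glue along the exceptional curves (as the $d=2$ case already shows, the punctual $\p^1$'s belong to the two-dimensional resolution component, not to a separate zero-dimensional one). A clean way around the bookkeeping is torus localization: the subtorus $\BC^\ast\subset\Aut(\p^1)$ fixing $0,\infty$ commutes with $\iota$, so the Bialynicki--Birula decomposition expresses $\chi_{t,\tilde t}(\Hilb^d(S)^{\BZ_2})$ in terms of the fixed locus $\{0,\infty\}\times E$ and its normal weights, reducing everything to explicit partition sums over the two fixed elliptic curves. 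In that approach the numerator $(1-q^{2m})^{6}$ and the factors $(1-y^{\pm1}q^{2m})^{-1}$ emerge from the normal-bundle weights, and the hard part becomes the careful tracking of Hodge weights needed to confirm that only $t\tilde t$ survives and that the twisted and untwisted contributions recombine into the stated infinite product.
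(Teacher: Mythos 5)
Your setup coincides with the paper's: reduce to the virtual motive $[\Hilb^d(S)^{\BZ_2}]^{\vir}$ for $S=\p^1\times E$, identify the resolution of $S/\BZ_2$ as a rational elliptic surface with normalized Hodge polynomial $y^{-1}+10+y$, and note that only $y=t\tilde t$ survives. Your low-degree checks ($d=1,2$) are correct and the geometric picture of how the free and punctual strata glue is right. But the proposal stops exactly where the proof has to happen: you explicitly defer the all-$d$ product formula to an unexecuted ``orbifold G\"ottsche formula'' or a torus-localization computation, and you flag the twisted local series and the component-wise $L^{-\dim C/2}$ normalization as unresolved. That is the entire content of the proposition beyond the coefficient checks, so as written this is a gap, not a proof.

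The paper closes this gap by citing the theorem of Gusein-Zade--Luengo--Melle-Hernandez \cite{GLM}, which gives in closed form
\[
\sum_{n\geq 0}[\Hilb^n(S)^{\BZ_2}]\,q^n=\Bigl(\prod_{i\geq 1}\tfrac{(1-q^{2i})^2}{1-q^i}\Bigr)^{\ell}\cdot \Exp\Bigl(\sum_{i\geq 1}q^{2i}L^{i-1}[\tilde S]\Bigr)
\]
for a $\BZ_2$-action with $\ell$ isolated fixed points, and then corrects for the dimension shift by substituting $q^{2i}\mapsto L^{-i}q^{2i}$ in the exponential factor before applying $\chi_{t,\tilde t}$. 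Note that your guessed sector decomposition does not match this: the local factor at each fixed point is $\prod_i(1-q^{2i})^2/(1-q^i)$ rather than $\prod_i(1-q^i)^{-1}$, and the ``untwisted'' part carries the nontrivial weight $L^{i-1}[\tilde S]$ rather than $[\tilde S]$ alone, so even the intended shape of your twisted/untwisted split would need to be corrected before attempting a derivation. If you want a self-contained argument, the efficient route is to prove (or quote) the \cite{GLM} identity; your localization sketch could in principle reproduce it, but none of the required partition sums or weight bookkeeping is carried out here.
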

\begin{proof}
We have more generally the following. Let $S$ be a smooth surface with a $\BZ_2$-action with $\ell$ isolated singularieties. Let $\tilde{S}$ the resolution of the quotient $S/\BZ_2$.
% $M$ is smoooth, let $[M]^{\mathrm{vir}} = L^{-\dim(M)/2} [M]$ be the virtual motive. 
Then we claim that
\[
\sum_{n=0}^{\infty} [\Hilb^n(S)^{\BZ_2}]^{\mathrm{vir}} q^n
=
\left( \prod_{i \geq 1} \frac{(1-q^{2i})^2}{1-q^i} \right)^{d}
\cdot \Exp\left( \sum_{i \geq 1} q^{2i} [\tilde{S}]^{\mathrm{vir}} \right),
\]
where we use the power structure on the right of motives (see \cite{BBS}) and used the trivial critical structure (so $[M]^{\mathrm{vir}}$ is just $L^{-\dim(M)/2} [M]$).

The claim implies the proposition, since applying $\chi_{t, \tilde{t}}$ is compatible with the power structure (see \cite{BBS}), so gives
\[
\sum_{n=0}^{\infty} \chi_{t,\tilde{t}}([\Hilb^n(S)^{\BZ_2}]^{\mathrm{vir}}) q^n
=
\left( \prod_{i \geq 1} \frac{(1-q^{2i})^2}{1-q^i} \right)^{d}
\cdot \Exp\left( \sum_{i \geq 1} q^{2i} \chi_{t,\tilde{t}}([\tilde{S}]^{\mathrm{vir}}) \right).
\]
Moreover, for over $\BZ_2$ action on $\p^1 \times E$, we have the resolution of the quotient
$(\p^1 \times E)/\BZ_2$ is a rational elliptic surface with virtual motive $L^{-1} + 10 + L$.

To prove the claim, one uses the following result of \cite[Sec.4]{GLM}:
\[
\sum_{n=0}^{\infty} [\Hilb^n(S)^{\BZ_2}] q^n
=
\left( \prod_{i \geq 1} \frac{(1-q^{2i})^2}{1-q^i} \right)^{d}
\cdot \Exp\left( \sum_{i \geq 1} q^{2i} \BL^{i-1} [\tilde{S}] \right).
\]
Correcting for the $L$-shift in dimension then boils down to making the variable change $q^{2} \mapsto L^{-1} q^{2i}$ in the second term only, see \cite{GLM} for details on the geometry. (Near the fixed points one has the decomposition \cite[(4.2)]{RefinedEnriques1}.)
	%this is correct for the contributions away from the fixed points and at the fixed points one uses the decomposition \eqref{erfer3}, 
\end{proof}

\begin{cor} \label{cor:DTcal in fiber classes}
	With the above assumptions, for $r \geq 1$ we have
\[ \DTcal(r,df,0) =
\begin{cases}
\frac{8}{r [r]_{t \tilde{t}}} & \text{ if } r \text{ is odd}, r|d \\
\frac{-2}{r [r]_{t \tilde{t}}} \Big( (t \tilde{t})^{-r/2} - 2 + (t \tilde{t})^{r/2} \Big) & \text{ if } r \text{ is even}, r|d \\
0 & \text{ else}
\end{cases}
\]
and thus
\[
\Omega(r,df,0) =
\begin{cases}
8 & \text{ if } r=1 \\
\chi_{t,\tilde{t}}([\p^1]^{\vir} )
= 
- ((t \tilde{t})^{1/2} + (t \tilde{t})^{-1/2} )
& \text{ if } r=2, 2|d \\
0 & \text{ else}
\end{cases}
\]
Moreover, $\DTcal(r,df,n) = \Omega(r,df,n) = 0$ if $r,n>0$.
\end{cor}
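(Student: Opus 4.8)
The plan is to invert Toda's wall-crossing formula (Theorem~\ref{thm:motivic toda formula}) against the explicit stable pair series of Proposition~\ref{prop:PTcal for fiber classes}. Throughout write $x = t\tilde t$ and $q = Q^{f}$, and let $Z$ be that series, which Proposition~\ref{prop:PTcal for fiber classes} exhibits as a function of $q$ and $x$ alone. The first step is the vanishing for $n>0$. In Toda's formula the full fiber-class series $\sum_{n,d}\PTcal_{n,df}(-p)^{n}q^{d}$ appears on the left; since it carries no power of $p$, every contribution proportional to a nontrivial power of $p$ on the right must cancel. As these are sourced exactly by the invariants $\DTcal(r,df,n)$ with $n>0$ (appearing with $p^{\pm n}$), and Theorem~\ref{thm:dependence of DT invariants} forces $\DTcal(r,df,n)$ to depend on $d$ only through $\mathrm{div}(v)$ and the type, so that the $p^{n}$-coefficient is rigid in $q$, one deduces $\DTcal(r,df,n)=\Omega(r,df,n)=0$ for all $n>0$. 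This settles the last assertion and reduces the problem to the slice $n=0$.

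Next I would compute the (plethystic) logarithm $\Log Z$. Because $Z$ is an infinite product of elementary factors, and $\Log$ --- the inverse of the power-structure $\Exp$ already used in Proposition~\ref{prop:PTcal for fiber classes} --- sends $(1-x^{s}q^{j})^{-1}\mapsto x^{s}q^{j}$ and $(1-q^{j})^{-a}\mapsto a\,q^{j}$ and is additive over products, applying it factor by factor collapses the whole series to the closed form
\[
\Log Z \;=\; \sum_{m\ge 1}\Big(8\,q^{m} + (x+x^{-1}-6)\,q^{2m}\Big)
\;=\; \frac{8q}{1-q} \;+\; (x+x^{-1}-6)\,\frac{q^{2}}{1-q^{2}}.
\]
The essential feature is that only the rank-one powers $q^{m}$ and the rank-two powers $q^{2m}$ survive.

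Now I would match this against the $n=0$ part of \eqref{Log toda eqn}. The $r=1$ term of the first sum contributes $[1]_{t\tilde t}\,\Omega(1,df,0)\sum_{d}q^{d}$, which can only account for the first summand above, giving $\Omega(1,df,0)=8$. For the even powers, the $r=2$ term of the first sum contributes $-[2]_{t\tilde t}\,\Omega(2,df,0)$ per even $d$, while the $r/2=1$ correction term contributes $-[1]_{(t\tilde t)^{2}}\,\Omega(1,\cdot,0)|_{t\mapsto t^{2}}=-8$; solving
\[
-[2]_{t\tilde t}\,\Omega(2,df,0)-8 \;=\; x+x^{-1}-6
\]
with $[2]_{t\tilde t}=(t\tilde t)^{1/2}+(t\tilde t)^{-1/2}$ gives $\Omega(2,df,0)=-\big((t\tilde t)^{1/2}+(t\tilde t)^{-1/2}\big)=\chi_{t,\tilde t}([\p^{1}]^{\vir})$, valid when $2\mid d$. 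The step I expect to be the main obstacle is to show that no higher class $\Omega(r,df,0)$ with $r\ge 3$ is needed: for a fixed curve class $df$ the coefficient of $q^{d}$ in \eqref{Log toda eqn} a priori mixes all ranks, so one cannot read off a single $\Omega(r,df,0)$ directly. Here Theorem~\ref{thm:dependence of DT invariants} is indispensable: since $f^{2}=0$ one has $v^{2}=-r^{2}$, so $\Omega(r,df,0)$ depends on $d$ only through $\gcd(r,d)$ and the type, and the matched $q$-series is therefore built from a finite list of universal constants. Peeling off the already-determined $r=1,2$ contributions leaves a series supported only on the powers $q^{m},q^{2m}$, which forces every remaining $\Omega(r,df,0)$ with $r\ge 3$, as well as $\Omega(2,df,0)$ for $d$ odd, to vanish.

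Finally the DT invariants follow from the BPS classes by the defining relation $\DTcal(v)=\sum_{k\mid v}\tfrac{1}{k[k]_{t\tilde t}}\Omega(v/k)|_{t\mapsto t^{k},\,\tilde t\mapsto\tilde t^{k}}$. For $v=(r,df,0)$ only divisors $k$ with $r/k\in\{1,2\}$ contribute, and each such term already requires $r\mid d$, which reproduces the vanishing off the divisibility locus. When $r$ is odd only $k=r$ survives and gives $\DTcal(r,df,0)=\tfrac{8}{r[r]_{t\tilde t}}$; when $r$ is even both $k=r$ and $k=r/2$ survive, and combining them via the identity $[r]_{t\tilde t}/[r/2]_{t\tilde t}=(t\tilde t)^{r/4}+(t\tilde t)^{-r/4}$ yields $\DTcal(r,df,0)=\tfrac{-2}{r[r]_{t\tilde t}}\big((t\tilde t)^{-r/2}-2+(t\tilde t)^{r/2}\big)$, as claimed.
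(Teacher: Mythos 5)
Your algebra in the later steps is sound: the plethystic logarithm of the product in Proposition~\ref{prop:PTcal for fiber classes} is indeed $\sum_m 8q^m+\sum_m(t\tilde t+(t\tilde t)^{-1}-6)q^{2m}$, the values $\Omega(1,df,0)=8$ and $\Omega(2,df,0)=-[2]_{t\tilde t}$ solve the resulting equations, and your inversion from $\Omega$ back to $\DTcal$ via $[r]_{t\tilde t}=[r/2]_{t\tilde t}\bigl((t\tilde t)^{r/4}+(t\tilde t)^{-r/4}\bigr)$ reproduces the stated formulas; this is the ``computation'' the paper alludes to. The proof breaks, however, at your first step. The premise that the full fiber-class series $\sum_{n,d}\PTcal_{n,df}(-p)^nq^d$ ``carries no power of $p$'' is false: Proposition~\ref{prop:PTcal for fiber classes} computes only its $p^0$-part $\sum_d\PTcal_{0,df}q^d$, and the invariants with $n\neq0$ are nonzero (this is the content of Conjecture~\ref{conj:fiber classes complete}, whose $\Exp$-factor depends on $p$). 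Moreover the $p^{\pm n}$ terms on the right of Theorem~\ref{thm:motivic toda formula} are not sourced exclusively by $\DTcal(r,df,n)$ with $r,n>0$: the rank-zero terms $\DTcal(0,df,n)$ enter with $p^n$ as well, and these are nonzero (the paper computes $\DTcal(0,(2d+1)f,1)=8\chi_{t,\tilde t}([E]^{\vir})$ immediately after the corollary). Taken literally, your cancellation argument would force those to vanish too, a contradiction. The vanishing $\DTcal(r,df,n)=0$ for $r,n>0$ is not a formal consequence of Toda's equation plus Proposition~\ref{prop:PTcal for fiber classes}; in the paper it is a geometric input, imported ``as in \cite[Cor.4.5]{RefinedEnriques1}''.

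This gap is not cosmetic, because your second step depends on it. Splitting the exponent of the right-hand side of Theorem~\ref{thm:motivic toda formula} as $E_0+E_++E_-$ according to the sign of the power of $p$, the $p^0$-coefficient of the PT series equals $e^{E_0}\cdot[p^0]\bigl(e^{E_+}e^{E_-}\bigr)$, and the cross terms in $[p^0]\bigl(e^{E_+}e^{E_-}\bigr)-1$ involve exactly the $\DTcal(r,df,n)$ with $r,n>0$. Only after these are known to vanish does $\sum_d\PTcal_{0,df}q^d$ equal $\exp\bigl(\sum_{r,d}(-1)^{r-1}[r]_{t\tilde t}\DTcal(r,df,0)q^d\bigr)$, which is what your matching uses. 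So the vanishing must be established first, by the geometric argument about semistable fiber-supported sheaves, and only then can one invert. A secondary weakness: your ``peeling off'' of the $r\ge3$ classes is still one equation per $d$ against infinitely many unknowns, since for a fixed value of $\gcd(r,d)$ the classes $\Omega(r,df,0)$ for different $r$ have distinct squares $v^2=-r^2$ and are a priori independent; Theorem~\ref{thm:dependence of DT invariants} alone does not close this system, and additional (geometric or unrefined) input is needed to conclude that the displayed solution is the unique one.
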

\begin{proof}
The first part follows from the motivic Toda equation, Proposition~\ref{prop:PTcal for fiber classes} and a computation.
For $r,n>0$, the vanishing follows as in \cite[Cor.4.5]{RefinedEnriques1}.
% one gets:
%\[ \DT(r,df,n) = 0 \text{ whenever } r,n>0. \]
\end{proof}

By the dependence property of DT invariants (Theorem~\ref{thm:dependence of DT invariants}) we also have
\begin{gather*}
\DTcal(0,2df,1) = \DTcal(0,0,1) = \chi_{t,\tilde{t}}([Q]^{\mathrm{vir}}) \\
\DTcal(0,(2d+1)f,1) = \DTcal(0,f,0) = 8 \chi_{t,\tilde{t}}([E]^{\mathrm{vir}}),
\end{gather*}
where $E$ is a smooth elliptic curve.
In \cite{Toda2} a $\chi$-independence conjecture for generalized Donaldson-Thomas invariants counting $1$-dimensional sheaves on Calabi-Yau threefolds was made. In our special case it would say that:
\[
\chi(\DTcal(0,\beta,n)) = \sum_{k \geq 1, k|\mathrm{gcd}(\beta,n)} \frac{1}{k^2} \chi(\DTcal(0,\beta/k,1)).
\]
Maulik-Shen \cite{MS} proved a refined $\chi$-independence for local $\p^2$, which suggests that the $\chi$-independence should hold in a refined way. Thus it is naturally to expect that:
\[
\DTcal(0,\beta,n) = \sum_{k \geq 1, k|\mathrm{gcd}(\beta,n)} \frac{1}{k [k]_{t \tilde{t}}} 
\DTcal(0,\beta/k,1)|_{t \mapsto t^k, \tilde{t} \mapsto \tilde{t}^k}.
\]
Assuming the $\chi$-independence hence fully determines the $\DTcal(0,df,n)$. 

Using Toda's equation we thus arrived at the following conjectural evaluation of the invariants $\PTcal_{n,df}$.
\begin{conj} \label{conj:fiber classes complete}
\begin{multline*}
\sum_{d \geq 0} \sum_{n \in \BZ} \PTcal_{n,df} (-p)^n q^d
=
\prod_{m \geq 1} \frac{(1-q^{2m})^{6}}{(1-(t \tilde{t})^{-1} q^{2m}) (1-q^{m})^{8} (1- t \tilde{t} q^{2m})} \\
\cdot \Exp\left( 
\frac{-p}{(1- (t \tilde{t})^{1/2} p)(1- (t \tilde{t})^{-1/2} p)}
\left[
\sum_{\substack{d \geq 1 \\ d \text{ odd}}} 8\chi_{t,\tilde{t}}([E]^{\vir}) q^d 
+
 \sum_{\substack{d \geq 1 \\ d \text{ even}}} \chi_{t,\tilde{t}}([Q]^{\vir}) q^d
\right] \right)
\end{multline*}
\end{conj}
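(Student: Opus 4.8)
The plan is to read off both factors on the right-hand side directly from the logarithmic form of Toda's equation \eqref{Log toda eqn}, organising the BPS classes $\Omega(r,\beta,n)$ according to whether $r=0$ or $r\geq 1$. The key structural observation is that in \eqref{Log toda eqn} the rank-zero classes $\Omega(0,\beta,n)$ enter \emph{only} through the first sum (the one carrying $p^{n}$, indexed by $r,n\geq 0$), because the $p^{-n}$ sum is indexed by $r,n>0$ and thus excludes $r=0$; by contrast, by Corollary~\ref{cor:DTcal in fiber classes} every class with $r\geq 1$ is supported at $n=0$ and so contributes only to the $p^{0}$ part. The two blocks of terms therefore separate cleanly, and since $\Log$ and $\Exp$ are plethystic inverses, with $\Exp(A+B)=\Exp(A)\cdot\Exp(B)$, it suffices to exponentiate each block individually and multiply.

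First I would treat the $r\geq 1$ block. It involves only $\Omega(1,df,0)=8$ and $\Omega(2,2d'f,0)=-((t\tilde t)^{1/2}+(t\tilde t)^{-1/2})$ from Corollary~\ref{cor:DTcal in fiber classes}, together with the associated $r=2$ correction term in \eqref{Log toda eqn}; all of these sit at $p^{0}$. Since the first part of Corollary~\ref{cor:DTcal in fiber classes} was itself extracted from Proposition~\ref{prop:PTcal for fiber classes} via Theorem~\ref{thm:motivic toda formula}, exponentiating this block must return exactly the $n=0$ series $\sum_{d\geq 0}\PTcal_{df,0}\,q^{d}$, i.e. the explicit infinite product forming the first factor of the claimed identity. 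Confirming this is a finite bookkeeping check using $[1]_{t\tilde t}=1$ and $[2]_{t\tilde t}=(t\tilde t)^{1/2}+(t\tilde t)^{-1/2}$.

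Next I would treat the $r=0$ block, whose input is $\Omega(0,df,n)$. I would pin this down using the assumed refined $\chi$-independence: comparing the multiple-cover expansion defining $\Omega$ with the refined $\chi$-independence expansion of $\DTcal(0,df,n)$ and inducting on $\gcd(d,n)$ shows that $\Omega(0,df,n)$ is independent of $n$ and equals $\DTcal(0,df,1)$, which by the dependence property (Theorem~\ref{thm:dependence of DT invariants}) is $8\chi_{t,\tilde t}([E]^{\vir})$ for $d$ odd and $\chi_{t,\tilde t}([Q]^{\vir})$ for $d$ even. The $r=0$ part of \eqref{Log toda eqn} then factors as $-\big(\sum_{d\geq 1}\Omega(0,df)q^{d}\big)\sum_{n\geq 1}[n]_{t\tilde t}\,p^{n}$, and the geometric resummation $\sum_{n\geq 1}[n]_{t\tilde t}\,p^{n}=\frac{p}{(1-(t\tilde t)^{1/2}p)(1-(t\tilde t)^{-1/2}p)}$ produces exactly the refined Gopakumar--Vafa kernel in the conjecture. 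Exponentiating this block gives the second factor, and multiplying the two factors yields the stated formula.

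The main obstacle is that the argument is only conditional: the step identifying $\Omega(0,df,n)$ with $\DTcal(0,df,1)$ rests on the refined $\chi$-independence, which is presently known only in special cases (e.g. local $\p^2$ by Maulik--Shen \cite{MS}) and not for $Q$. A secondary, more technical point is to make the plethystic $\Exp$/power-structure bookkeeping rigorous—in particular to verify that the $r\geq 1$ block exponentiates precisely to the product of Proposition~\ref{prop:PTcal for fiber classes} rather than merely agreeing with it to low order—but this is routine once the separation of the $r=0$ and $r\geq 1$ blocks has been established.
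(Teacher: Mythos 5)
Your proposal is correct and follows essentially the same route as the paper: the first factor comes from Proposition~\ref{prop:PTcal for fiber classes} via Corollary~\ref{cor:DTcal in fiber classes} (the $r\geq 1$, $n=0$ block of \eqref{Log toda eqn}), the second from the $r=0$ classes determined by Theorem~\ref{thm:dependence of DT invariants} together with the assumed refined $\chi$-independence, and the two blocks are assembled by exponentiating Toda's equation. You correctly flag that the statement is only a conjecture precisely because the refined $\chi$-independence (and the critical-structure assumption underlying Proposition~\ref{prop:PTcal for fiber classes}) is not established for $Q$.
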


Finally, we consider also the implication for the Gopakumar-Vafa polynomials.
Consider the plethystic logarithm:
\begin{multline*}
\Log \sum_{d \geq 0} \sum_{n \in \BZ} H(\PTcal_{n,df}) (-p)^n q^d
=
\frac{-p}{(1- up)(1-u^{-1}p)} 
\left[ 
\sum_{\substack{d \geq 1 \\ d \text{ odd}}} 8H([E]^{\vir}) q^d
+
\sum_{\substack{d \geq 1 \\ d \text{ even}}} H([Q]^{\vir}) q^d \right] \\
+ \sum_{\substack{m \geq 1 \\ m \text{ even}}} (u^{-2} + 2 + u^2) q^m
+ \sum_{\substack{m \geq 1 \\ m \text{ odd}}} 8 q^m
\end{multline*}
This yields conjecturally the Gopakumar-Vafa polynomials
\begin{align*}
\GV_{df}(p,q)
& = 
\begin{cases}
	8 H([E]^{\vir}) - 8 \frac{(1- up)(1-u^{-1}p)}{p} & \text{ if } d \text{ odd} \\
	H([Q]^{\vir}) - (u^{-2} + 2 + u^2) \frac{(1- up)(1-u^{-1}p)}{p} & \text{ if } d \text{ even}
\end{cases} \\
& = 
\begin{cases}
	8 ( -p^{-1} + 2 - p ) & \text{ if } d \text{ odd} \\
	-u^{2} p - u^{2} p^{-1} - 8 u - 2 p + 24 - 2 p^{-1} - 8 u^{-1} - u^{-2} p - u^{-2} p^{-1} & \text{ if } d \text{ even}
\end{cases}
%\begin{cases}
%H([Q]^{\vir}) - 8 \frac{(1- up)(1-u^{-1}p)}{p} & \text{ if } d \text{ odd} \\
%8 H([E]^{\vir}) - (u^{-2} + 2 + u^2) \frac{(1- up)(1-u^{-1}p)}{p} & \text{ if} d \text{ even}.
%\end{cases}
\end{align*}

Using Conjecture~\ref{conj:refined MT},
i.e. the expansion
$ \mathrm{GV}_{\beta}(u,p) =
\sum_{i,j} {^ph}^{i,j}_{\beta} (-1)^{i+j} p^i u^j$,
we obtain the perverse Hodge numbers of $M_{df,1}$:\\

\begin{center}
\begin{minipage}{0.35\linewidth}
	\begin{longtable}{| l | c |}
		\hline
		\diagbox[width=1.1cm,height=0.7cm]{$i$}{$j$} 
		& $0$  \\
		\nopagebreak \hline
		$-1$ & $8$  \\
		$0$ & $16$ \\
		$1$ & $8$ \\
		\hline
		\caption{${^ph}^{i,j}(M_{df,1}(Q))$ \text{ for } d \text{ odd}}
	\end{longtable}
\end{minipage}
\begin{minipage}{0.35\linewidth}
	\begin{longtable}{| c | c  c  c c c |}
		\hline
		\diagbox[width=1.1cm,height=0.7cm]{$i$}{$j$} 
		& $-2$ & $-1$ & $0$ & $1$ & $2$ \\
		\nopagebreak \hline
		$-1$ & 1 & & $2$ & & $1$ \\
		$0$ & & $8$ & $24$ & $8$ & \\
		$1$ & $1$ &  & $2$ & & $1$  \\
		\hline
		\caption{${^ph}^{i,j}(M_{df,1}(Q))$ \text{ for } d \text{ even}}
	\end{longtable}
\end{minipage}
\end{center}
Note that for $d$ even, the perverse Hodge numbers ${^ph}^{i,j}(M_{df,1}(Q))$ equal the perverse Hodge numbers of the fibration $Q \to [(\p^1 \times E)/\BZ_2]$.

Refined Gopakumar-Vafa invariants have been computed for elliptically fibered compact Calabi-Yau threefolds reduced fibers via physics methods in \cite{HKKrefined}.
However, the above is the first conjectural computation for an elliptic fibration with non-reduced fibers.
%It is remarkable that $\GV_{df}(p,q)$ only depend on the parity of $d$.

%\prod_{m \geq 1} \frac{(1-q^{2m})^{16}}{(1-q^m)^{8}} \Exp\left( \sum_{m \geq 1} 

\subsection{Curve classes $2 \nmid \beta$}
We may formally subtract contributions from multiple and reducible curves from PT invariants by defining the invariants $\PTcalprim_{n,\beta}$ by
\[ \sum_{n,\beta} \PTcalprim_{n,\beta} (-p)^n Q^{\beta} = 
\Log\left( \sum_{\beta} \sum_{n \in \BZ} \PTcal_{n,\beta} (-p)^n Q^{\beta} \right). \]
If $2 \nmid \beta$, what we assume from now on, then \eqref{Log toda eqn} becomes:
\begin{equation} \label{12244}
\sum_{n,\beta} \PTcalprim_{n,\beta} (-p)^n
=
\sum_{r,n \geq 0} (-1)^{r-1} [n+r]_{t \tilde{t}} \Omega(r,\beta,n) p^{n} 
+ \sum_{r,n > 0} (-1)^{r-1} [n+r]_{t \tilde{t}} \Omega(r,\beta,n) p^{-n}.
\end{equation}

If $\beta$ is primitive, then the invariants $\Omega(r,\beta,n)$ on the right hand side depend by Theorem~\ref{thm:dependence of DT invariants} only on the square $\beta^2-2rn-r^2$. Hence in this case, also $\PTcalprim_{n,\beta}$ only depends on the square.
This is a remarkable property, since there are usually different monodromy orbits of primitive vectors on an Enriques surface and the linear systems of curves can behave quite differently, see \cite[Sec.4.7]{RefinedEnriques1} for a discussion and examples.
We expect that moreover for any $v=(r,\beta,n)$ with $2 \nmid \beta$ the invariant 
$\Omega(v)$ only depends on $v^2$, and hence that
$\PTcalprim_{n,\beta}$ only depends on $\beta$ via its square $\beta^2$. We will assume this below (If one does not want to assume this, one can just restrict to the case $\beta$ primitive below). We write:
\begin{alignat*}{2}
\PTcalprim_{n,d} & = \PTcalprim_{n,\beta} &  \quad \text{ if } \beta^2 & = 2d,\ 2 \nmid \beta,  \\
\Omega_d & = \Omega(v)&  \quad \text{ if } v^2 & = 2d,\ 2 \nmid \beta.
\end{alignat*}

Stable sheaves in class $v=(r,\beta,n)$ for $2 \nmid \beta$ are supported on one of the four Enriques fibers $Y$ of $Q \to \p^1$,
and hence by \cite[Lemma.3.5]{RefinedEnriques1} also scheme-theoretically.
So the moduli space of stable sheaves on $Q$ with Chern charater $v=(r,\beta,n)$ primitive with $2 \nmid \beta$,
is just 4 copies of a moduli space of stable sheaves on $Y$ (which then split into two components corresponding to the determinant of the sheaves).
In case $v^2=2d$ odd we get
\[ \Omega_{d} = 8 \chi_{t,\tilde{t}}( [\Hilb^{d+1/2}(Y)]^{\vir} ). \]
By G\"ottsche's formula for the Hilbert schemes, we thus have the evaluation:
\[
\sum_{d \text{ half integral}} \Omega_d q^d =
8 \frac{1}{q^{1/2}} \prod_{n \geq 1} \frac{1}{(1- (t \tilde{t})^{-1} q^n) (1-q^n)^{10} (1- t \tilde{t} q^n)}
\]
If $v^2=2d$ is even, we get
\begin{align*}
	\Omega_d & = 8 \chi_{t,\tilde{t}}( [ M((0,\beta_d,0), L) ]^{\mathrm{vir}}) = 8 (-u)^{-(2d+1)} H(M((0,\beta_d,0), L))
\end{align*}
where $\beta_d \in H_2(Y,\BZ)$ is a primitive class of square $2d$
and we use that $M((0,\beta_d,0),L)$ and $M((0,\beta_d,0),L+K_Y)$ are deformation equivalent \cite{Knutsen} so have the same Betti/Hodge numbers, see also \cite[Prop.C.1]{Enriques}.

Rewriting then equation \eqref{12244} in terms of
$\Omega_d$ we find:
\begin{align*}
	& \sum_{d = 0}^{\infty}
	\PTcalprim_{n,d} q^d (-p)^n \\
	= & \left( \sum_{n \text{ half-integral}} \Omega_n q^n \right)
	\sum_{\substack{r \geq 1 \\ r \text{ odd}}} \left( [r]_{t \tilde{t}} q^{r^2/2} + \sum_{n \geq 1} [n+r]_{t \tilde{t}} (p^n + p^{-n}) q^{rn+r^2/2} \right) \\
	& -
	\left( \sum_{n \text{ integral}} \Omega_n q^n \right)
	\left[ \sum_{n \geq 1} [n]_{t \tilde{t}} p^n + \sum_{\substack{r \geq 1 \\ r \text{ even}}} \left( [r]_{t \tilde{t}} q^{r^2/2} + \sum_{n \geq 1} [n+r]_{t \tilde{t}} (p^n + p^{-n}) q^{rn+r^2/2} \right) \right] \\
	= & 
	\left( 8 \frac{1}{q^{1/2}} \prod_{n \geq 1} \frac{1}{(1- (t \tilde{t})^{-1} q^n) (1-q^n)^{10} (1- t \tilde{t} q^n)} \right)
	\frac{\Theta(t \tilde{t}, q^2)}{( (t \tilde{t})^{\frac{1}{2}} - (t \tilde{t})^{-\frac{1}{2}})}
	\frac{\Theta(p (t \tilde{t})^{\frac{1}{2}}, q^2) \Theta(p (t \tilde{t})^{-\frac{1}{2}}, q^2) \eta(q^2)^8}{\Theta(p (t \tilde{t})^{\frac{1}{2}},q) \Theta(p (t \tilde{t})^{-\frac{1}{2}},q) \eta(q)^4} \\
	- & 
	\left( \sum_{n \text{ integral}} \Omega_n q^n \right)
	\frac{\Theta(t \tilde{t},q^2)}{((t \tilde{t})^{\frac{1}{2}}-(t \tilde{t})^{-\frac{1}{2}})} \frac{1}{\Theta((t \tilde{t})^{\frac{1}{2}} p , q^2) \Theta( (t \tilde{t})^{-\frac{1}{2}} p, q^2)} \\
	= & 
	\, 8 \frac{\Theta(t \tilde{t}, q^2)}{ \eta^{12}(q) \Theta(t \tilde{t},q) }
	\frac{\Theta(p (t \tilde{t})^{\frac{1}{2}}, q^2) \Theta(p (t \tilde{t})^{-\frac{1}{2}}, q^2) \eta(q^2)^8}{\Theta(p (t \tilde{t})^{\frac{1}{2}},q) \Theta(p (t \tilde{t})^{-\frac{1}{2}},q) \eta(q)^4} \\
	- & 
	\left( \sum_{n \text{ integral}} \Omega_n q^n \right)
	\frac{\Theta(t \tilde{t},q^2)}{((t \tilde{t})^{\frac{1}{2}}-(t \tilde{t})^{-\frac{1}{2}})} \frac{1}{\Theta((t \tilde{t})^{\frac{1}{2}} p , q^2) \Theta( (t \tilde{t})^{-\frac{1}{2}} p, q^2)} \\
	%\frac{\Theta(p (t \tilde{t})^{\frac{1}{2}}, q^2) \Theta(p (t \tilde{t})^{-\frac{1}{2}}, q^2) \eta(q^2)^8}{\Theta(p (t \tilde{t})^{\frac{1}{2}},q) \Theta(p (t \tilde{t})^{-\frac{1}{2}},q) \eta(q)^4}
\end{align*}
where in the second step we used the Jacobi forms identities
\cite[Prop.2.3,2.4]{RefinedEnriques1}.
% of Propositions~\ref{prop:Jac identity Enriques} and \ref{prop:Jac identity Enriques2},

Specializing further to the Betti realization $t=\tilde{t}=u$ we get
\begin{align*}
\sum_{n,d} \PTcalprim_{n,d}|_{t=\tilde{t}=u} (-p)^n q^d 
= & 8 \frac{\Theta(u^2, q^2)}{ \Theta(u^2,q) }
\frac{ \eta(q^2)^8}{\eta^{16}(q)}
\frac{\Theta(p u, q^2) \Theta(p u^{-1}, q^2)}{\Theta(p u,q) \Theta(p u^{-1},q)} \\
- & 
\frac{ \sum_{n \text{ integral}} \Omega_n|_{t=\tilde{t}=u} q^n }{u-u^{-1}}
\frac{\Theta(u^2,q^2)}{\Theta(u p , q^2) \Theta( u^{-1} p, q^2)}
\end{align*}
We thus obtain the partial evaluation of Gopakumar-Vafa polynomials:
\begin{align*}
\sum_{d \geq 0} \GV_{\beta_d}(u,p) q^d
= & 
8 \frac{(1-u^{-1}p)(1-up)}{(-p)}
\frac{\Theta(u^2, q^2)}{ \Theta(u^2,q) }
\frac{ \eta(q^2)^8}{\eta^{16}(q)}
\frac{\Theta(p u, q^2) \Theta(p u^{-1}, q^2)}{\Theta(p u,q) \Theta(p u^{-1},q)} \\
- & 
\frac{ \sum_{n \text{ integral}} \Omega_n|_{t=\tilde{t}=u} q^n }{u-u^{-1}}
\frac{(1-u^{-1}p)(1-up)}{(-p)}
\frac{\Theta(u^2,q^2)}{\Theta(u p , q^2) \Theta( u^{-1} p, q^2)}
\end{align*}

The Gopakumar-Vafa polynomials satisfy conjecturally
\[ \mathrm{GV}_{\beta}(u,p) =
\sum_{i,j} {^ph}^{i,j}_{\beta} (-1)^{i+j} p^i u^j, \]
where ${^ph}^{i,j}_{\beta}$ are defined in terms of $M_{\beta,1}$.
However, $M_{\beta,1}$ is isomorphic to $4$ copies of the moduli space $M^Y(0,\beta,1) = M^Y((0,\beta,1),L) \sqcup M^Y((0,\beta,1),L+K_Y)$,
and the later two togehter with the Hilbert-Chow morphism are deformation equivalent
% of stable $1$-dimensional sheaves on the Enriques surface $Y$ in class $s+df$.

Altogether we obtain the following concrete conjecture
(equivalent to Conjecture~\ref{conj:perverse Hodge intro}):
\begin{conj} \label{conj:perverse Hodge}
Let $\beta \in H_2(Y,\BZ)$ be an effective curve class on a generic Enriques of square $\beta^2=2d$ with $2 \nmid \beta$, and let $M_d$ be one of the two connected components of the moduli space $M^Y(0,\beta,1)$.
% satisfy:
Then the perverse Hodge numbers ${^ph}^{i,j}(M_d)$ of $M_d$ satisfy:
%be the perverse Hodge numbers of the Hilbert-Chow morphism $M_d \to \p^N$.
%Then:
\begin{multline}
\label{Key equation2}
\sum_{d \geq 0} \sum_{i,j} {^ph}^{i,j}(M_d) (-1)^{i+j} p^i u^j q^d
= 
\frac{(1-u^{-1}p)(1-up)}{(-p)}
\frac{\Theta(u^2, q^2)}{ \Theta(u^2,q) }
\frac{ \eta(q^2)^8}{\eta^{16}(q)}
\frac{\Theta(p u, q^2) \Theta(p u^{-1}, q^2)}{\Theta(p u,q) \Theta(p u^{-1},q)} \\
-
\left( \sum_{d=0}^{\infty} (-u)^{-(2d+1)} H(M_d) \right) \cdot
\frac{(1-u^{-1}p)(1-up)}{(u-u^{-1}) (-p)}
\frac{\Theta(u^2,q^2)}{\Theta(u p , q^2) \Theta( u^{-1} p, q^2)}
\end{multline}
\end{conj}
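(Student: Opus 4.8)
The plan is to assemble \eqref{Key equation2} from the structural results already in hand, using the refined Maulik--Toda Conjecture~\ref{conj:refined MT} only at the very end to pass from Gopakumar--Vafa polynomials to perverse Hodge numbers. First I would start from \eqref{12244}, the $2\nmid\beta$ specialization of the log-Toda formula \eqref{Log toda eqn}, which already expresses $\sum_{n}\PTcalprim_{n,\beta}(-p)^n$ purely in terms of the BPS classes $\Omega(r,\beta,n)$. Invoking Theorem~\ref{thm:dependence of DT invariants} together with the expected extension that $\Omega(v)$ depends on $v$ only through $v^2$ when $2\nmid\beta$, I would reorganize the right-hand side of \eqref{12244} as a sum over the reduced discriminant $v^2=\beta^2-2rn-r^2$, so that every $\Omega$ appearing is one of the universal classes $\Omega_d$.

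Second, I would compute the two families of BPS classes explicitly. For $v^2$ odd the relevant moduli space is a Hilbert scheme $\Hilb^{d+1/2}(Y)$, whose virtual motive (via \eqref{Mvir for M smooth}) is read off from G\"ottsche's formula, producing the $\eta/\Theta$ product displayed for $\sum_{d\text{ half-integral}}\Omega_d q^d$. For $v^2$ even the moduli space is $M((0,\beta_d,0),L)$; here I would use the deformation equivalence of \cite{Knutsen} (cf.\ \cite[Prop.C.1]{Enriques}) to replace it by $M_d$ and write $\Omega_d$ through the Betti polynomial $H(M_d)$. Substituting both families into the reorganized \eqref{12244} and collecting terms, the geometric series in $p$ and $q$ recombine into theta and Dedekind-$\eta$ quotients exactly via the Jacobi-form identities \cite[Prop.2.3,2.4]{RefinedEnriques1}; this is the step where one must carefully track the $q^2$-versus-$q$ arguments and the half-integral exponent shifts.

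Third, I would specialize to the Betti realization $t=\tilde{t}=u$, extract $\sum_{d}\GV_{\beta_d}(u,p)q^d$ by dividing off the universal factor $(-p)^{-1}(1-up)(1-u^{-1}p)$, and finally apply Conjecture~\ref{conj:refined MT} in the form $\GV_\beta(u,p)=\sum_{i,j}{^ph}^{i,j}_\beta(-1)^{i+j}p^iu^j$. Here I would use that for $2\nmid\beta$ the moduli space $M_{\beta,1}$ on $Q$ is eight deformation-equivalent copies of $M_d$ (the four Enriques fibers of $Q\to\p^1$, each moduli space splitting by determinant into two components deformation-equivalent by \cite{Knutsen}); hence ${^ph}^{i,j}_\beta=8\,{^ph}^{i,j}(M_d)$ and the overall factor $8$ cancels uniformly against the leading $8$'s on the right, leaving exactly \eqref{Key equation2}.

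The main obstacle is not any single manipulation but that the identity genuinely rests on three unproven inputs: the refined Maulik--Toda Conjecture~\ref{conj:refined MT}, the refined $\chi$-independence upgrading Theorem~\ref{thm:dependence of DT invariants} to dependence on $v^2$ alone, and the implicit assumption that the critical structures on all relevant moduli spaces are trivial so that \eqref{Mvir for M smooth} applies. Absent these, the chain above only establishes that \eqref{Key equation2} is \emph{equivalent} to Conjecture~\ref{conj:refined MT}, rather than an unconditional theorem. Within the derivation itself, the genuinely delicate analytic point is the Jacobi-form recombination of Step~two, where a single misaligned theta argument propagates into an incorrect modular weight and invalidates the closed form.
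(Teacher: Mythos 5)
Your proposal reproduces the paper's own derivation in Section~\ref{sec:motivic invariants}: specializing \eqref{Log toda eqn} to \eqref{12244}, evaluating $\Omega_d$ via G\"ottsche's formula for odd $v^2$ and via $[M((0,\beta_d,0),L)]^{\vir}$ with Knutsen's deformation equivalence for even $v^2$, recombining with the Jacobi-form identities, passing to the Betti realization and $\GV_{\beta_d}$, and finally invoking Conjecture~\ref{conj:refined MT} together with the identification of $M_{\beta,1}$ with eight copies of $M_d$ to cancel the factors of $8$. You also correctly flag the same conditional inputs the paper relies on (refined Maulik--Toda, refined $\chi$-independence/dependence on $v^2$ only, and trivial critical structure), so the statement remains a conjecture rather than a theorem, exactly as in the paper.
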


\begin{rmk}
	We discuss briefly the expected modularity of the generating series
	\begin{align*}
		\sum_{n,d} \PT^{\mathrm{mot},\mathrm{prim}}_{n,d}|_{t=\tilde{t}=u} (-p)^n q^d 
		= & 8 \frac{\Theta(u^2, q^2)}{ \Theta(u^2,q) }
		\frac{ \eta(q^2)^8}{\eta^{16}(q)}
		\frac{\Theta(p u, q^2) \Theta(p u^{-1}, q^2)}{\Theta(p u,q) \Theta(p u^{-1},q)} \\
		+ & 
		\frac{ \sum_{n \text{ integral}} a(n)|_{t=\tilde{t}=u} q^n }{u-u^{-1}}
		\frac{\Theta(u^2,q^2)}{\Theta(u p , q^2) \Theta( u^{-1} p, q^2)}
	\end{align*}
	The first term on the right hand side is a Jacobi form in the elliptic variables $p,u$ of weight $-4$ and index $\begin{pmatrix} -1/2 & 0 \\ 0 & -3/2 \end{pmatrix}$.\footnote{The function $\Theta$ is of index $1/2$ and weight $-1$, and if $f(z,w)$ is of index $\begin{pmatrix} L_{11} & L_{21}^t \\ L_{21} & L_{22} \end{pmatrix}$ for a multivariable $w=(w_1,\ldots,w_l)$, then
		$f(z_1z_2,w)$ and $f(z_1 z_2^{-1}, w)$ are of index
		\[
		\begin{pmatrix}
			L_{11} & L_{11} & L_{21}^{t} \\
			L_{11} & L_{11} & L_{21}^t \\
			L_{21} & L_{21} & L_{22}
		\end{pmatrix},
		\quad 
		\begin{pmatrix}
			L_{11} & -L_{11} & L_{21}^{t} \\
			-L_{11} & L_{11} & -L_{21}^t \\
			L_{21} & -L_{21} & L_{22}
		\end{pmatrix},
		\]
		respectively.
		Moreover, sending $f(p,q)$ to $f(p,q^2)$ (or $f(p^2,q)$) does not change the weight but divides the index by $2$ (or multiplies the index by $4$ respectively), see \cite{EZ, HilbHAE}.
		%, and sending $f(p,q)$ to $f(p^2,q)$ does not change the weight and multiplies the index by $2^2=4$.
	}
	Since we expect the above to be an equality of Jacobi forms of the same weight and index, we thus expect that
	\[
	\frac{\sum_{n \text{ integral}} a(n)|_{t=\tilde{t}=u} q^n}{(u-u^{-1})^{-1}}
	=
	8 \sum_{d=0}^{\infty} (-u)^{-(2d+1)} H(M((0,s+nf,0),L) q^d \]
	is a Jacobi form of weight $-5$ and index $-2$.
	Unfortunately, its not clear what is the group under which it satisfies the Jacobi form identities and also not the pole structure, so the very minimal data we have is not enough to fix this.  \qed
\end{rmk}

\subsection{Proof of Proposition~\ref{prop:asymptotics}}
We state a couple of basic lemmas, that will lead to the proof of
Proposition~\ref{prop:asymptotics}.

\begin{lemma} \label{lemma:asymptotics1}
	If \Cref{conj:perverse Hodge} holds and $i,j < -d/2 - 1$, then
	\[ {^ph}^{i,j}(M_d) (-1)^{i+j}
	=
	\mathrm{Coeff}_{p^i u^j q^d}
	\left[
	\frac{(1-u^{-1}p)(1-up)}{(-p)}
	\frac{\Theta(u^2, q^2)}{ \Theta(u^2,q) }
	\frac{ \eta(q^2)^8}{\eta^{16}(q)}
	\frac{\Theta(p u, q^2) \Theta(p u^{-1}, q^2)}{\Theta(p u,q) \Theta(p u^{-1},q)} \right]
	\]
\end{lemma}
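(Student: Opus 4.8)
The plan is to read off the coefficient of $p^iu^jq^d$ from the conjectural identity \eqref{Key equation2}. On the left this coefficient is exactly ${}^ph^{i,j}(M_d)(-1)^{i+j}$, and the first term on the right is precisely the quantity whose coefficient the lemma asserts to equal it. Thus the entire content reduces to the vanishing
\[
\mathrm{Coeff}_{p^iu^jq^d}\bigl[\text{second (Betti-dependent) term of \eqref{Key equation2}}\bigr]=0\qquad(i<-d/2-1).
\]
I would establish this by showing that the second term factors as a series carrying no $p$-dependence times an explicit infinite product whose $p$-degree is bounded by half the $q$-order, so that a count in the single variable $p$ suffices.

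First I would simplify the transcendental prefactor of the second term. Using the product expansion $\Theta(x,q)=(x^{1/2}-x^{-1/2})\prod_{m\ge1}(1-xq^m)(1-x^{-1}q^m)(1-q^m)^{-2}$, the factor $\Theta(u^2,q^2)$ produces a $(u-u^{-1})$ cancelling the denominator $u-u^{-1}$, while at $q=0$ one has $\Theta(up,q^2)\Theta(u^{-1}p,q^2)\big|_{q=0}=(p+p^{-1})-(u+u^{-1})$ together with the elementary identity $\tfrac{(1-u^{-1}p)(1-up)}{-p}=(u+u^{-1})-(p+p^{-1})$. These combine to $-1$, and I would check that the remaining factors collapse to
\[
\frac{(1-u^{-1}p)(1-up)}{(u-u^{-1})(-p)}\,\frac{\Theta(u^2,q^2)}{\Theta(up,q^2)\,\Theta(u^{-1}p,q^2)}=-P,
\]
where
\[
P=\prod_{m\ge1}\frac{(1-u^2q^{2m})(1-u^{-2}q^{2m})(1-q^{2m})^2}{(1-upq^{2m})(1-u^{-1}p^{-1}q^{2m})(1-u^{-1}pq^{2m})(1-up^{-1}q^{2m})}.
\]
Hence the second term equals $\mathcal B\cdot P$, where $\mathcal B$ is the Betti-dependent factor of \eqref{Key equation2}; the essential point is that $\mathcal B$ depends only on $u$ and $q$ and carries only nonnegative powers of $q$.

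The core of the argument is then a degree count in $p$. In $P$ the variable $p$ occurs solely through the four geometric expansions $(1-u^{\pm1}p^{\pm1}q^{2m})^{-1}=\sum_{e\ge0}(u^{\pm1}p^{\pm1})^eq^{2me}$, in each of which every factor $p^{\pm1}$ is accompanied by a factor $q^{2m}$ with $m\ge1$. Therefore a monomial of $P$ of $q$-degree $2d''$ has $p$-degree of absolute value at most $d''$, so $\mathrm{Coeff}_{q^{2d''}}(P)$ is a Laurent polynomial supported in $p$-degrees $[-d'',d'']$. Since $\mathcal B$ contributes no $p$ and only nonnegative powers of $q$, in $\mathcal B\cdot P$ a term $q^d$ arises from $q^{d'}$ (from $\mathcal B$) times $q^{2d''}$ (from $P$) with $d'+2d''=d$, whence its $p$-degree is at most $d''\le d/2$. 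Consequently $\mathrm{Coeff}_{p^iq^d}$ of the second term vanishes whenever $i<-d/2$; since $i<-d/2-1$ forces $i<-d/2$, the vanishing holds and the lemma follows.

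The only genuine work is the bookkeeping in the second paragraph: verifying the cancellation that turns the theta ratio into the clean product $P$, and confirming that at each fixed $q$-order both sides of \eqref{Key equation2} are honest Laurent polynomials in $p,u$ (which they are, as fixing $q^d$ involves only finitely many summands), so that coefficientwise comparison is legitimate. I note that the argument uses only $i<-d/2$; the symmetric hypothesis $j<-d/2-1$ plays no role in this lemma and is retained only because it is the range in which the first term is further simplified in the proof of \Cref{prop:asymptotics}.
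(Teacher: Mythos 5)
Your proof is correct and is exactly the ``direct check'' that the paper's one-line proof alludes to: you verify that the theta-quotient prefactor of the second term of \eqref{Key equation2} collapses to the product $P$ (matching the form already displayed in \eqref{Key equation}), and then observe that in $P$ every power of $p^{\pm1}$ is tied to at least $q^2$, so the Betti-dependent term has no $p^iq^d$-coefficient once $i<-d/2$. Your side remark that only the bound on $i$ is used (the hypothesis on $j$ being inherited from the range needed later in Proposition~\ref{prop:asymptotics}) is also accurate.
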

\begin{proof}
By direct check, the second term does not contribute in this range.
\end{proof}

\begin{lemma}
	Consider the function
\[ F(u,p,q)
=
\frac{(1-u^{-1}p)(1-up)}{(-p)}
\frac{\Theta(u^2, q^2)}{ \Theta(u^2,q) }
\frac{ \eta(q^2)^8}{\eta^{16}(q)}
\frac{\Theta(p u, q^2) \Theta(p u^{-1}, q^2)}{\Theta(p u,q) \Theta(p u^{-1},q)}. \]
For fixed $i,j$, the $p^{i-d-1} u^{j-d} q^d$ coefficient of $F$ is independent of $d$ for $d > \frac{3}{4} (i+j)$. Denote it (up to sign) by
$a_{ij} := (-1)^{i+j+1} \mathrm{Coeff}_{p^{i-d-1} u^{j-d} q^d}(F)$.
It is given by
\[
	\sum_{i,j \geq 0} a_{ij} x^i y^j
=
(1-xy) \prod_{n \geq 1} \frac{1}{(1-x^{n+1} y^{n-1}) (1-x^{n-1} y^{n+1}) (1-x^n y^n)^{10)}}
\]
\end{lemma}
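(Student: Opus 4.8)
The plan is to turn $F$ into a tractable infinite product, reinterpret the shifted‑diagonal coefficient extraction as an ordinary coefficient extraction, and then isolate the single factor responsible for the dependence on $d$.

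First I would expand every theta and eta factor by the product formula $\Theta(z,q)=(z^{1/2}-z^{-1/2})\prod_{m\geq1}\frac{(1-zq^m)(1-z^{-1}q^m)}{(1-q^m)^2}$. The prefactor $\frac{(1-u^{-1}p)(1-up)}{-p}$ is tailored to cancel the zero modes $(z^{1/2}-z^{-1/2})$ of the two denominator thetas $\Theta(pu,q),\Theta(pu^{-1},q)$, while the zero modes of the numerator thetas collapse to $p+p^{-1}-u-u^{-1}$. Collecting the pure‑$q$ contributions brings $F$ to the shape
\[ F = -(p+p^{-1}-u-u^{-1})\,\prod_{m\geq1}\frac{(1-q^{2m})^2}{(1-q^m)^{10}}\,\prod_{m\geq1}\prod_{Z}\frac{1-Zq^{2m}}{1-Zq^m}, \]
where $Z$ runs over $\{pu,\,(pu)^{-1},\,pu^{-1},\,p^{-1}u,\,u^2,\,u^{-2}\}$. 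The decisive simplification is the cancellation of all even indices, $\prod_{m\geq1}\frac{1-Zq^{2m}}{1-Zq^m}=\prod_{m\ \mathrm{odd}}(1-Zq^m)^{-1}$, which leaves only odd‑index denominators.

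Next I would perform the substitution $(p,u,q)\mapsto(x,y,xyQ)$, under which $p^au^bq^c\mapsto x^{a+c}y^{b+c}Q^c$; the exponent bookkeeping gives $\mathrm{Coeff}_{p^{i-d-1}u^{j-d}q^d}F=\mathrm{Coeff}_{x^{i-1}y^jQ^d}\,F(x,y,xyQ)$. Among all surviving factors $\frac1{1-Zq^m}$ exactly one becomes a pure power series in $Q$, namely $Z=(pu)^{-1}$, $m=1$, which produces $\frac1{1-Q}$. Writing $F(x,y,xyQ)=\frac1{1-Q}\,R(x,y,Q)$ with $R:=(1-Q)F(x,y,xyQ)$, the coefficient of $x^{i-1}y^jQ^d$ equals $\sum_{e\leq d}\mathrm{Coeff}_{x^{i-1}y^jQ^e}R$, so it stabilizes once $R$ contributes nothing further in that bidegree, the stable value being $\mathrm{Coeff}_{x^{i-1}y^j}R(x,y,1)$. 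In the limit $Q\to1$ the prefactor supplies a factor $1-xy^{-1}$ that cancels the $m=1$ term of the $Z=u^{-2}$ family; after this, passing to the variables $w=xy$, $v=x/y$ and a bookkeeping of multiplicities of $(1-w^n)$, $(1-w^nv)$ and $(1-w^nv^{-1})$ — which I would check equal $10,1,1$ after all cancellations — identifies $R(x,y,1)$, up to the overall sign $(-1)^{i+j+1}$, with the asserted product $(1-xy)\prod_{n\geq1}\frac{1}{(1-x^{n+1}y^{n-1})(1-x^{n-1}y^{n+1})(1-x^ny^n)^{10}}$.

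The main obstacle is the explicit threshold $d>\tfrac34(i+j)$, i.e. the vanishing of $\mathrm{Coeff}_{x^{i-1}y^jQ^e}R$ for $e$ beyond this range. I would establish this by a weight estimate: after the even‑index cancellation every surviving denominator generator $\frac1{1-x^\alpha y^\beta Q^m}$ other than the extracted $\frac1{1-Q}$ satisfies $m\leq\tfrac34(\alpha+\beta)$, the extremal case being $Z=(pu)^{-1}$, $m=3$, which gives $\frac1{1-(xy)^2Q^3}$ with ratio exactly $3/4$. Bounding the $Q$‑degree of a monomial of $R$ of bidegree $(i-1,j)$ by $\tfrac34$ times its total $(x,y)$‑degree then yields the stated range. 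The genuinely delicate point is to control the generators carrying negative powers of $y$ (coming from the $u^{-2}$ family and the prefactor): for these the naive ratio fails, and the estimate must instead be combined with the constraint that the total $x$‑degree they can consume is itself bounded by $i$, so that they contribute only a bounded amount to the $Q$‑degree.
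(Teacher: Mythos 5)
Your argument follows the same route as the paper's proof, which is modelled on G\"ottsche's stabilization argument: the substitution $q\mapsto upq$ (your $(p,u,q)\mapsto(x,y,xyQ)$) converts the shifted coefficient extraction into a straight one, one pulls out the single factor $(1-Q)^{-1}$ (the paper's $G=(1-q)\tilde F$ is your $R$ up to the harmless extra factor $-p$), stabilization follows from vanishing of the coefficients of $R$ in large $Q$-degree, and the stable value is read off at $Q=1$. Your explicit expansion of $F$ into the odd-index product over $Z\in\{pu,(pu)^{-1},pu^{-1},p^{-1}u,u^2,u^{-2}\}$, the identification of $(pu)^{-1}q$ as the unique factor becoming a pure $Q$-series, the cancellation of $(1-xy^{-1})$ against the prefactor at $Q=1$, and the final multiplicity count $10,1,1$ are all correct and do reproduce the asserted product for $\sum a_{ij}x^iy^j$. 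One caveat: when you split off $\frac1{1-Q}$ you must not also keep a separate factor $(1-Q)$ in the generator list, i.e.\ $R$ is the prefactor times the product over all generators \emph{except} $(pu)^{-1}q$, with no extra $(1-Q)$; double-counting here shifts every coefficient.

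The genuine gap is the threshold $d>\tfrac34(i+j)$. As you note, every generator except $xy^{-1}Q$ (from $u^{-2}q$) satisfies $m\le\tfrac34(\alpha+\beta)$, but your proposed repair --- bounding the power $k$ of $xy^{-1}Q$ by the available $x$-degree --- only yields $Q\text{-degree}\le k+\tfrac34(i+j)\le i+\tfrac34(i+j)$, which is strictly weaker than the stated range. In fact the stated range is false: for $(i,j)=(2,0)$ a direct computation gives
\[
\mathrm{Coeff}_{p^{1-d}u^{-d}q^d}(F)=-1,\ 9,\ 0,\ -1,\ -1,\ \ldots\qquad (d=0,1,2,3,4,\ldots),
\]
so the value at $d=2>\tfrac32$ differs from the stable value $-1$, which is only attained from $d=3$ on; the culprit is exactly the monomial $(xy^{-1}Q)^2\cdot(y^2Q)$ of $R$ in bidegree $(2,0)$ and $Q$-degree $3$. (The paper asserts the same $\tfrac34(i+j)$ bound with no justification, so this is an error you have inherited rather than introduced.) The weaker bound you can actually prove still gives eventual stabilization and hence the product formula for $a_{ij}$, but it does not prove the lemma as stated, and the explicit range in Proposition~\ref{prop:asymptotics} would need to be rechecked against the corrected threshold.
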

\begin{proof}
We follow \cite[Cor.2.11]{Goettsche}.
Let $\tilde{F}(u,p,q) = (-p) F(u,p, upq)$. Then
\[
\mathrm{Coeff}_{p^{i-d-1} u^{j-d} q^d}(F) = -\mathrm{Coeff}_{p^i u^j q^d}(\tilde{F}(u,p,q)).
\]
We define $G = (1-q) \tilde{F}$.
Then the coefficient $u^i p^j q^d$ of $G$ vanishes whenever $d > \frac{3}{4} (i+j)$, so arguing as in \cite{Goettsche} we see that for fixed $i,j$ the $p^i u^j q^d$ coefficient of $\tilde{F}$ becomes independent of $d$ for all $d > \frac{3}{4}(i+j)$, lets say it is $b_{ij}$ (which is $a_{ij}$ up to the sign $(-1)^{i+j}$). But then as in \cite{Goettsche} $\sum_{i,j}b_{ij} u^i p^j b_{ij} = G(u,p,q=1)$, which gives the result.
% Moreover, the asymptotic is given precisely by $G(u,p,q=1)$ which shows the result.
%$\sum_{i,j} (-1)^{i+j} a_{ij} = G(u,p,q=1)$ which gives the result.
%We first rewrite
%\begin{multline*}
%	F(u,p,q) = \Bigg[
%	\frac{(1-u^{-1}p)(1-up)}{(-p)} \prod_{m \geq 1} \frac{1}{(1-q^m)^{8}} \\
%	\times 
%	\prod_{\substack{m \geq 1 \\ m \text{ odd}}} 
%	\frac{1}{(1-u^{-2} q^m) (1-u^2 q^m) (1-up q^m) (1-u p^{-1} q^m)(1-u^{-1} p q^m) (1-u^{-1} p^{-1} q^m) (1-q^m)^2} 
%	\Bigg]
%\end{multline*}
%
%Then we argue as in Goettsche.
\end{proof}

\section{Open questions}
\label{sec:open questions}
\subsection{Relationship $K_Y$ and $Q$}
As proven in \cite{Enriques}, the (unrefined) generalized DT invariants of $Q$ in fiber classes are related to the (unrefined) Vafa-Witten invariants of $Y$ by
	\[ \VW^{\mathrm{unref}}(v) = \frac{1}{4} \chi(\DTcal(v)). \]
	% see \cite{Enriques}.
	We may ask whether this equality can be refined:

\begin{question} How is the $\chi_{t}$ specialization of the motivic invariants of $Q$, $\DTcal(v)|_{\tilde{t}=1}$ related to the Nekrasov-Okounkov refined Vafa-Witten invariant $\VW(v)$ of $Y$ (studied in \cite{RefinedEnriques1}?
	\end{question}
	
	For $v=(r,\beta,n)$ with $r$ is odd or $2 \nmid \beta$ we have $v \notin \pi_{\ast} H^{\ast}(X,\BZ)$, where $\pi : X \to Y$ is the covering K3 surface. Thus stable sheaves on $Q$ with this Chern character are supported on the Enriques fibers of $Q \to \p^1$ and one should  expect:
	\begin{equation} \label{VW=DT} \VW(v) = \frac{1}{4} \DTcal(v)|_{\tilde{t}=1} \end{equation}
		
	However, Corollary~\ref{cor:DTcal in fiber classes} and \cite[Prop.4.7]{RefinedEnriques1} shows that \eqref{VW=DT} fails in general, e.g. one has:
	\[
	\DTcal(2,2f,0)|_{\tilde{t}=1} = \frac{t^{-1} - 2 + t}{[2]_t}, \quad \VW(2,2f,0)=0.
	\]
	
\subsection{Dependence on $\beta$}
For $\beta \in H_2(Y,\BZ)$ let $\GV_{\beta}(p,q)$ be the refined Gopakumar-Vafa polynomial of $Q$.
If $2 \nmid \beta$, we conjectured that $\GV_{\beta}$ should only depend on the square.
For $2|\beta$ we only have minimal amount of computations in 
Section~\ref{subsec:fiber curve classes}.
Still it is natural to ask:
\begin{question}
	If $2|\beta$, does $\GV_{\beta}$ also depend only on the square of the class $\beta$?
\end{question}

\end{document}